\newcommand{\eqn}{\begin{eqnarray}}
\newcommand{\eqnd}{\end{eqnarray}}
\newcommand{\dx}{\,\text{d$x$}}
\newcommand{\dS}{\,\text{d$S$}}
\newcommand{\dt}{\,\text{d$t$}}
\newcommand{\dy}{\,\text{d$y$}}
\newcommand{\dW}{\,\text{d$W$}}
\newcommand{\e}{{\rm{e}}}
\newcommand{\argmin}[1]{\rm{argmin}}
\newcommand{\diag}{{\rm diag}}
\newtheorem{thm}{Theorem}
\newtheorem{cor}[thm]{Corollary}
\newtheorem{lem}[thm]{Lemma}
\newtheorem{pro}[thm]{Proposition}
\newtheorem{defi}[thm]{Definition}
\newtheorem{rem}[thm]{Remark}
\newtheorem{ex}[thm]{Example}
\begin{document}

%
%

	
	\title{Error analysis of truncated expansion solutions to high-dimensional parabolic PDEs}
	\author{Christoph Reisinger\footnote{Mathematical Institute and Oxford-Man Institute for Quantitative Finance, University of Oxford, Andrew Wiles Building, Woodstock Road, Oxford, OX2 6GG, United Kingdom, \{reisinge,wissmann\}@maths.ox.ac.uk} \, and Rasmus Wissmann\footnotemark[\value{footnote}] \footnote{Research supported by EPSRC and Nomura via a CASE Award, the German National Academic Foundation, and St. Catherine's College, Oxford.}}
	\date{\today}
	\maketitle

	\begin{abstract}
	We study an expansion method for high-dimensional parabolic PDEs which constructs accurate approximate solutions
	by decomposition into solutions to lower-dimensional PDEs,
	and which is particularly effective if there are a low number of dominant principal components.
	The focus	of the present article is the derivation of sharp error bounds for the constant coefficient case and a first and second order approximation. We give a precise characterisation when these bounds hold for (non-smooth) option pricing applications and provide numerical results demonstrating that the practically observed convergence speed is in agreement with the theoretical predictions.
	\end{abstract}
	
\noindent
	{\bf Key words}: high-dimensional PDEs; asymptotic expansions; anchored ANOVA; financial derivative pricing
	
	

	


\section{Introduction}

High-dimensional partial differential equations play an important role in the modelling of many real-world phenomena. This is chiefly the case in finance and economics, where one is often concerned with probability densities and expectations of economical or financial time series modelled by multivariate stochastic processes. 
These financial applications are the main motivation for the analysis in this paper, but we anticipate that similar methods can be useful, e.g., in the context of Fokker-Planck equations for high-dimensional chemical systems.

The computational effort necessary to solve $N$-dimensional PDEs numerically with standard grid-based methods grows exponentially with $N$, a phenomenon referred to as the ``curse of dimensionality''. Even more sophisticated PDE methods tailored to high-dimensional approximation, such as those based on sparse grids (see \cite{BG04} for a survey), are typically not able to deal with practical problems where $N$ exceeds about five to eight (see \cite{heinecke12highly-parallel} for results with sparse finite elements and \cite{LO08,RW07} for the sparse grid combination technique).
A sparse wavelet method is proposed in \cite{von2004numerical}, which gives almost dimension-independent convergence rates for parabolic equations with non-smooth initial data, as they are typical for derivative pricing applications, where the Cauchy data are singular with a singularity located in a low co-dimensional manifold. Numerical experiments for Black-Scholes and stochastic volatility models are given in \cite{HKSW10}.

A radial basis function method for multi-dimensional Black-Scholes PDEs is presented in \cite{pettersson2008improved}, but only numerical results for one and two dimensions are presented.

It seems clear that if the approximation is based on tensor product meshes, a compression of the data has to take place in the solution process.
An active area of research is concerned with low rank tensor approximations of such functions (see, e.g., the survey paper \cite{grasedyck2013literature}), and in particular the representation in the so-called tensor train format. The method can be targeted specifically towards high-dimensional diffusion problems, motivated by applications in finance, and \cite{kazeev2013low} derive theoretical bounds on the rank of such approximations, which break the curse of dimensionality. We are, however, not aware of any successful practical application.


%

There is a sizeable and growing body of literature which deals with the feasibility of integration (cubature) in high dimensions. Several of the ideas from cubature are transferable to the solution of PDEs, especially in the situation where the PDE solution can be expressed as a high-dimensional integral via a known Green's function (typically the transition density of an underlying stochastic process).

We will use a PDE-based analysis in this paper. This is to allow for future extensions to situations where a Green's function is not known, or even non-linear and free boundary problems, which are not covered by the present analysis.
Although the setting in this paper -- i.e., of linear second order parabolic PDEs with constant coefficients --  is in the intersection of problems where PDE and cubature methods are applicable, we provide novel error bounds even in this case.

Our approach here is most closely related to cubature based on tensor product decompositions. We focus therefore on that segment of the literature.\footnote{Alternative
approaches are Monte Carlo methods or cubature on Wiener space \cite{LV04, GL11, LL12}, based on high-order integration rules derived by exact integration of multivariate polynomials of Brownian integrals.}
In particular, \cite{sloanetal2002a, sloanetal2002b} construct lattice rules by ``dimension-by-dimension'' integration.
This is especially effective if there is an ordering of dimensions by importance. For instance, \cite{wangsloan2005}
investigates the effective dimension of integration problems in finance using ANOVA decompositions, including numerical examples for path-dependent and multi-asset derivatives; \cite{griebelholtz10}
combines truncated ANOVA decompositions with dimension adaptive sparse grids \cite{GG03}, provides an error analysis in mixed Sobolev norms and presents numerical examples for Collateralized Mortgage Obligations and Asian options.
A general framework of decompositions into lower dimensional projections is developed in \cite{kuoetal2010}, who show certain minimality and uniqueness properties of such decompositions, which contain ANOVA as a special case.


A similar decomposition is the basis of a PCA-based expansion method for PDEs introduced in \cite{RW07}.
The method exploits the empirical observation that many diffusion processes of interest (such as stock prices in equity baskets, or forward interest rates with different maturities) have relatively large correlations, which leads to covariance matrices with one or a few significant principal components, while the eigenvalues for the remaining eigenvectors are an order of magnitude smaller. Transformed into the corresponding basis, the solution of the PDE can be reasonably well approximated by a low-dimensional PDE solution restricted to the first principal components. Then, dimensionwise corrections (interpretable as approximate Taylor expansions in the small eigenvalues) can be added to arrive at successively more accurate approximations. The key point is that even if the dimension of the original PDE may be very high (say 10 to 100), an approximation with practically sufficient accuracy can often be constructed by the solution of a sequence of much lower-dimensional PDEs (say 2- or 3-dimensional).

The relation between PDE expansions and anchored ANOVA is pointed out in
 \cite{R12} and \cite{SGW12};
a higher order extension is also sketched in \cite{HKSW10}.
A practically useful feature of decomposition methods is their inherent parallelism, which is exploited in \cite{RW07} and more recently in
\cite{SMW13}.
An extension to stochastic volatility models is given in \cite{HKSW10}, where option values for baskets with two to eight equities, each with their stochastic volatility process, are computed, albeit with significantly reduced accuracy in the higher-dimensional cases.
In \cite{RW13}, we demonstrated that these methods can be successfully applied to more complex financial market models and derivatives. For example, we were able to solve 60-dimensional PDEs for Bermudan Swaptions in the LIBOR Market Model 
with comparable accuracy to standard Monte Carlo methods in similar or less (often significantly less) computation time.

Another line of research concerns the optimal coordinate system as basis for
these expansions.
PCA-based expansions (PDE-motivated or anchored ANOVA) are well-suited to deal with many derivative pricing applications, because market assets typically show high levels of correlation, and therefore a transformation to principal components gives a natural ordering with rapidly decreasing importance of higher-dimensional contributions.
In \cite{IT04}, optimal linear transformations of the original coordinates are constructed
in order to minimise the effective dimension,
while \cite{griebelhullmann2014}
proposes non-linear
coordinate transformations to extend linear PCA.

In this article, we investigate the theoretical properties of the PCA-based PDE expansion method from \cite{RW07}. We provide a framework for establishing the existence and accuracy of the approximate solutions, and give precise theoretical error bounds for first and second order versions.
While the method has previously been motivated heuristically via Taylor-expansions and its relation to anchored ANOVA methods, and its usefulness has been demonstrated by successful numerical studies, these are to the authors' knowledge the first theoretical error bounds in terms of the PDE
coefficients and the data. 

Specifically, the contribution of this article is to
\begin{itemize}
\item
derive error bounds for the PCA-based expansion method for parabolic constant coefficient PDEs, where the initial data have certain mixed-order smoothness;
\item
analyse theoretically the applicability to non-smooth data which typically arise in financial engineering;
\item
demonstrate agreement between theoretical predictions and experimental results.
\end{itemize}

The rest of this paper is organised as follows. 
Section \ref{sec:ExpansionMethod} presents the PCA-based expansion method for the heat equation and states and discusses the first main theoretical result.
Section \ref{sec:ErrorBounds} proves sharp error bounds for a first- and second order expansion under sufficient regularity.
Section \ref{subsec:taylor} provides an alternative construction of the schemes via Taylor expansions and discusses the links.
Section \ref{subsec:existderiv} analyses the applicability to non-smooth data, focussing on cases arising in finance.
In Section \ref{sec:Numerics}, we first generalise the method to constant coefficient parabolic PDEs and then give numerical examples which demonstrate agreement between the theoretical predictions and numerical results.
Finally, Section \ref{sec:Conclusion} summarizes the results and describes future research directions.


    \section{Set-up and first main result}\label{sec:Expansion}
	\label{sec:ExpansionMethod}
\label{sec:setup}

To introduce the main concepts, we focus on the $N$-dimensional heat equation
\begin{eqnarray}\label{eq:PDEHeat}
        \frac{\partial u}{\partial t} &=& \mathcal{L} u = \sum_{k=1}^N \lambda_k\frac{\partial^2 u}{\partial z_k^2}, \\
		u(z,0) &=& g(z), \label{eq:PDEHeatBoundary}
		\end{eqnarray}
for $z\in\mathbb{R}^N$, $t\in (0,T)$, $\lambda = (\lambda_1,\ldots,\lambda_N) \in \mathbb{R}_+^N$, and assume $\lambda_1 \ge \lambda_2 \ge \ldots \ge 0$.

We will discuss in Section \ref{subsec:defmodel} how principal component analysis (PCA) can be used to transform a general parabolic PDE with constant coefficients to this form, and outline in
Section \ref{sec:Conclusion}
how localisation arguments can be used to deal with variable coefficients, with a reference to \cite{RW17} where numerical tests are presented for
the variable coefficient setting.
	
	
 	\begin{defi}
	\label{def:unu}
 	Given an initial-value problem of the form (\ref{eq:PDEHeat}) and (\ref{eq:PDEHeatBoundary}),
	and an index set $\nu \subseteq \{1,\ldots,N\}$, define
	a differential operator 
	 	\begin{eqnarray}
\mathcal{L}^\nu = \sum_{k\in\nu} \lambda_{k}\frac{\partial^2}{\partial z_k^2},		
\label{eqn:lnu}
 	\end{eqnarray}
%
%
%
and an approximation $u^\nu$ to $u$ as the solution of
     \begin{eqnarray}\label{eq:PDEDef4}
         \frac{\partial u^\nu}{\partial t}  &=& \mathcal{L}^\nu u^\nu, \\
 		u^\nu(\cdot,0) &=& g. \label{eq:PDEDef4boundary}
     \end{eqnarray}
 	\end{defi}
 	
    Note that since $\mathcal{L}^\nu$ only operates on the dimensions in the index set $\nu$, the problem of calculating $u^\nu(z_0,T)$ \emph{for a fixed} $z_0\in\mathbb{R}^N$ is of spatial dimension $|\nu|$. Calculating the full solution $u^\nu(z,T)$ for all values of $z\in\mathbb{R}^N$ is still an $N$-dimensional problem.
	
 	\begin{defi}
	\label{def:uxi}
 	For a given $\xi = \{ (w_1,\nu_1), (w_2,\nu_2), \ldots, (w_n,\nu_n) \}$, where $w_i\in\mathbb{R}$ and $\nu_i\subseteq \{1,\ldots,N\}$ for all $1\leq i\leq n$, we define
     \begin{eqnarray}\label{eq:PDEDef5}
         u^\xi &=& \sum_{(w,\nu)\in\xi}w \; u^\nu.
     \end{eqnarray}
     We will refer to $u^\xi$ as a \emph{(truncated) expansion}.
 	\end{defi}
	
 	The idea is that $\xi$ encodes an approximate solution for $u$ via expansion into solutions $u^\nu$ of lower-dimensional PDEs. 
		We will use the notation
\begin{eqnarray}
\label{eq:expan-err}
	\hat{u}^\xi &:=& u^\xi - u
\end{eqnarray}
for the expansion error. 



	In the following, we introduce the expansions considered in this paper.
	Consider the solution $u(z,t,\lambda)$ of (\ref{eq:PDEHeat}) explicitly as a function of $\lambda$ and define, for some $0 \le \lambda^0
	= (\lambda^0_1,\ldots,\lambda^0_N)\in \mathbb{R}^N$, $0 \le \delta \lambda = (\delta \lambda_1,\ldots,\delta\lambda_N) \in \mathbb{R}^N$, $e_j$ the $j$-th canonical basis vector,
	\begin{eqnarray*}
	(S_j u)(z,t,\lambda^0) &=& u(z,t,\lambda^0 + \delta\lambda_j e_j), \\
	(\Delta_j u)(z,t,\lambda^0) &=& u(z,t,\lambda^0 + \delta\lambda_j e_j)-u(z,t,\lambda^0),
	\end{eqnarray*}
such that $S_j = I + \Delta_j$. Then
	\begin{eqnarray}
	u(z,t,\lambda^0 + \delta \lambda) = \left(\prod_{j=1}^N S_j\right) u(z,t,\lambda^0) 
	= \left(\prod_{j=1}^N (I + \Delta_j)\right) u(z,t,\lambda^0) 
= \!\!\!\!\!\! \sum_{\alpha \in \{0,1\}^N} \Delta^\alpha u(z,t,\lambda^0),
	\label{anova1}
	\end{eqnarray}
where $\alpha$ is a multi-index of $0$'s and $1$'s, i.e., the difference operator in each direction appears at most with power 1 in each term, and
$\Delta^\alpha = \prod_{i=1}^N \Delta_i^{\alpha_i}$.
	For instance, for $N=2$, and omitting $z$ and $t$ for brevity,
		\begin{eqnarray*}
	u(\lambda^0_1+\delta \lambda_1,\lambda^0_2+\delta \lambda_2) &=& \hspace{-0.6 cm}
	\underbrace{u(\lambda^0_1,\lambda^0_2)}_{=\ \Delta_1^{0} \Delta_2^{0} u
	\ =\  \Delta^{(0,0)}u} \\
	&& \hspace{-3 cm} + \underbrace{u(\lambda^0_1+\delta \lambda_1,\lambda^0_2)- u(\lambda^0_1,\lambda^0_2)}_{=\ \Delta_1^{1} \Delta_2^{0} u
	\ =\  \Delta^{(1,0)}u}
		+ \underbrace{u(\lambda^0_1,\lambda^0_2+\delta \lambda_2) - u(\lambda^0_1,\lambda^0_2)}_{=\  \Delta_1^{0} \Delta_2^{1} u
	\ =\  \Delta^{(0,1)}u} \\ 
	&&  \hspace{-3 cm} + \underbrace{u(\lambda^0_1+\delta \lambda_1,\lambda^0_2+\delta \lambda_2)
	- u(\lambda^0_1+\delta \lambda_1,\lambda^0_2)
	- u(\lambda^0_1,\lambda^0_2+\delta \lambda_2)
	+u(\lambda^0_1,\lambda^0_2)}_{=\  \Delta_1^{1} \Delta_2^{1} u
	\ =\  \Delta^{(1,1)}u}.
	\end{eqnarray*}

	Now consider specifically $\lambda^0 = (\lambda_1,\ldots,\lambda_r,0,0,\ldots,0)$, $\delta\lambda = \lambda-\lambda^0$,
	then, for any $m\ge 0$,
		\begin{eqnarray}
		\label{eqn:truncexp}
	u(\lambda) \;\; = & \underbrace{\sum_{j=0}^m \sum_{|\alpha|=j}  \Delta^\alpha u(\lambda^0)}_{=:} &+ \;\;
	 \underbrace{\sum_{j=m+1}^N \sum_{|\alpha|=j} \Delta^\alpha u(\lambda^0)}_{=:} \\
	\;\; = & 
	 u_{r,m}(\lambda) &+ \hspace{1.7 cm} \hat{u}_{r,m}(\lambda),
	 \label{eqn:truncerr}
		\end{eqnarray}
	where $u_{r,m}$ is seen as an approximation to $u$, and $\hat{u}_{r,m}$ the error, $\alpha \in \{0,1\}^N$, $|\alpha| = \sum_{i=1}^N \alpha_i$.
Then for $\alpha \in \{0,1\}^N$
one finds that
	\begin{eqnarray}
		\Delta^\alpha u(\lambda^0) &=& 
		\sum_{0\le \beta\le \alpha} (-1)^{\alpha-\beta} u (\lambda^0 + \delta\lambda \cdot \beta)
		\qquad \text{ if }  \forall \; k \le r: \; \alpha_k=0,
\label{mixed-difference}
	\end{eqnarray}
	where `$\cdot$' is element-wise multiplication, and $\Delta^\alpha u(\lambda^0) = 0$ otherwise. The last statement follows because
	$\delta \lambda_k = 0$ for all $k\le r$. 
	To demonstrate (\ref{mixed-difference}), we proceed by induction in $|\alpha|$.
	The statement is clearly true for $\alpha=0$.
	Consider next $\alpha = e_j$ for some $j>r$, i.e.\ $|\alpha| = 1$. Then
\begin{eqnarray*}
	\Delta^\alpha u \; = \; u(\lambda_0 + \lambda_j e_j) - u(\lambda_0) \; = \; 
	u(\lambda_0 + \delta \lambda \cdot \alpha) - u(\lambda_0) \; = \; \sum_{0\le \beta\le \alpha} (-1)^{\alpha-\beta} u (\lambda^0 + \delta\lambda \cdot \beta).
\end{eqnarray*}	
Let now $0 \le \alpha$, $0\neq \alpha$ with $\alpha_j=0$ for some $j$.
Then, from the induction hypothesis,
\begin{eqnarray*}
	\Delta^{\alpha+e_j} u(\lambda^0) & = &
	\Delta_j \sum_{0\le \beta\le \alpha} (-1)^{\alpha-\beta} u (\lambda^0 + \delta\lambda \cdot \beta) \\
	 &=& 
	\sum_{0\le \beta\le \alpha} (-1)^{\alpha-\beta} u (\lambda^0 + (\delta\lambda + \lambda_j e_j) \cdot \beta)
	- \sum_{0\le \beta\le \alpha} (-1)^{\alpha-\beta} u (\lambda^0 + \delta\lambda \cdot \beta) \\
	&=&		\sum_{e_j \le \beta\le \alpha+e_j} (-1)^{\alpha-\beta+1} u (\lambda^0 + \delta\lambda \cdot \beta)
	+ \sum_{0\le \beta\le \alpha} (-1)^{\alpha-\beta+1} u (\lambda^0 + \delta\lambda \cdot \beta) \\ 
	&=& \sum_{0\le \beta\le \alpha+e_j} (-1)^{\alpha+e_j-\beta} u (\lambda^0 + \delta\lambda \cdot \beta).
\end{eqnarray*}

	We can make a number of observations:
	\begin{itemize}
	\item
	For $\lambda>0$,
	$\Delta^{\alpha}/\lambda^{\alpha}$ is a consistent approximation to the mixed derivative in $\lambda$ of order ${\alpha}$. 
	\item
$u_{r,m}$ is of the form $u^\xi$ from Definition \ref{def:uxi} for some $\xi$, i.e., a linear combination of solutions to (\ref{eq:PDEDef4}--\ref{eq:PDEDef4boundary}). We will use both notations interchangeably. 
\item
The largest number of different non-zero $\lambda_j$ for any such term is $r+m$, which implies that no PDE of dimension higher than $r+m$ has to be solved to find $u_{r,m}$.
\item
Lastly, (\ref{eqn:truncexp}) represents an anchored ANOVA decomposition of the function $u$, for a particular choice of anchor. See, for instance, \cite{griebelholtz10} or \cite{kuoetal2010}.
	\end{itemize}

\begin{ex}
\label{ex-first-order}
 For $m=1$, we can write $u_{r,1}$, using Definition \ref{def:unu}, as 
  \begin{eqnarray}
  \nonumber
 u_{r,1} &=& u^{\{1,\ldots,r\}} + \sum_{k=r+1}^N \left(u^{\{1,\ldots,r,k\}} - u^{\{1,\ldots,r\}} \right) \\
 &=& (1+r-N) u^{\{1,\ldots,r\}} + \sum_{k=r+1}^N u^{\{1,\ldots,r,k\}},
 \label{ur1}
  \end{eqnarray}
 such that, with Definition \ref{def:uxi}, $u_{r,1} = u^\xi$ for
  \begin{eqnarray}
\nonumber
 	\xi &=& \{ (1+r-N,\{ 1,\ldots,r\}), (1,\{ 1,\ldots,r,r+1\}), (1,\{ 1,,\ldots,r,r+2\}),
	\\ && (1,\{ 1,,\ldots,r,r+3\}),\ldots, (1,\{ 1,,\ldots,r,N\}) \}.
	 \label{xi-first}
 \end{eqnarray}
 \end{ex}

We now state the main result on the expansion error for Example \ref{ex-first-order}.
We will mainly be concerned with smooth solutions from the following class of functions.
\begin{defi}
\label{def:Cb}
Let
\begin{eqnarray*}
C^{j,k,mix} &=& \left\{
g \in C^b: \;
\partial_{i_1}^j \ldots \partial_{i_k}^j g \in C^b, \; \forall 1\le i_1 < \ldots < i_k \le N
\right\}, \\
C^b &=& \left\{
g: \,
\mathbb{R}^N \rightarrow \mathbb{R} \text{ continuous}: \;
\exists \ c>0 \;\;  \forall \ z \in \mathbb{R}^N \;\;
|g(z)| \le c  
\right\}.
\end{eqnarray*}
\end{defi}
	Then we have the following:
\begin{thm}\label{thm:BoundPDEDiff2}
Assume $g \in C^{2,2,mix}$ in (\ref{eq:PDEHeat}--\ref{eq:PDEHeatBoundary}).
Then the expansion error $\hat{u}_{r,1}$ 
satisfies
\begin{eqnarray}\label{eq:BoundPDEDiff2_1}
		\left\| \hat{u}_{r,1}(\cdot, t) \right\|_\infty &\leq& 
		t^2 \sum_{r<i<k\le N} { \lambda_k \lambda_i \left\| \frac{\partial^4 g}{\partial z_k^2\partial z_i^2} \right\|_\infty }.
\end{eqnarray}
\end{thm}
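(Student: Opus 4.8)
The plan is to work directly from the exact error representation in (\ref{eqn:truncexp}) with $m=1$, $\lambda^0=(\lambda_1,\dots,\lambda_r,0,\dots,0)$, $\delta\lambda=\lambda-\lambda^0$, where $\hat u_{r,1}(\lambda)=\sum_{j=2}^{N}\sum_{|\alpha|=j}\Delta^{\alpha}u(\lambda^0)$. By (\ref{mixed-difference}) every term with $\alpha_k=1$ for some $k\le r$ vanishes, so only multi-indices $\alpha\in\{0,1\}^N$ with $\operatorname{supp}\alpha\subseteq S:=\{r+1,\dots,N\}$ and $|\alpha|\ge2$ remain. I would reorganise this finite sum by grouping the surviving terms according to the \emph{largest} index occurring in $\operatorname{supp}\alpha$. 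Writing $S=\{s_1<\dots<s_M\}$, $M=N-r$, and $\Delta_j=S_j-I$, the commuting operators satisfy the telescoping identity
\[
\prod_{k=1}^{M}(I+\Delta_{s_k})-I-\sum_{k=1}^{M}\Delta_{s_k}=\sum_{k=2}^{M}\Delta_{s_k}\Bigl(\prod_{l=1}^{k-1}(I+\Delta_{s_l})-I\Bigr),
\]
and since $u(\lambda)=\bigl(\prod_{k\in S}S_k\bigr)u(\lambda^0)$ while $u_{r,1}=u(\lambda^0)+\sum_{k\in S}\Delta_k u(\lambda^0)$ by (\ref{anova1}) and (\ref{ur1}), applying the identity to $u(\lambda^0)$ gives $\hat u_{r,1}=\sum_{k=2}^{M}\Psi_k$ with
\[
\Psi_k:=u(\mu^{(k-1)}+h_k)-u(\mu^{(k-1)})-u(\lambda^0+h_k)+u(\lambda^0),\qquad \mu^{(k-1)}:=\lambda^0+\eta_k,
\]
where $\eta_k:=\sum_{l<k}\lambda_{s_l}e_{s_l}$ and $h_k:=\lambda_{s_k}e_{s_k}$ have disjoint supports. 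Thus each $\Psi_k$ is a single first-order mixed difference of $u$ in the two increments $\eta_k$ and $h_k$; this is the step that makes the weak regularity hypothesis suffice, since the reorganisation never differentiates $u$ more than once in each of two disjoint coordinate blocks.

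Next I would exploit the explicit $\lambda$-dependence of the heat equation: $u(\cdot,t,\lambda)=\bigl(\prod_{k=1}^{N}e^{t\lambda_k\partial_{z_k}^2}\bigr)g$, all factors commute, $\partial_{\lambda_k}u=t\,\partial_{z_k}^2u$, and hence for $i\ne k$
\[
\partial_{\lambda_i}\partial_{\lambda_k}u=t^2\,\partial_{z_i}^2\partial_{z_k}^2u=t^2\Bigl(\prod_{l=1}^{N}e^{t\lambda_l\partial_{z_l}^2}\Bigr)\frac{\partial^4 g}{\partial z_i^2\,\partial z_k^2}.
\]
Each $e^{t\lambda_l\partial_{z_l}^2}$ is convolution with a Gaussian probability density (the identity if $\lambda_l=0$), hence an $L^\infty$-contraction, so
\[
\bigl\|\partial_{\lambda_i}\partial_{\lambda_k}u(\cdot,t,\lambda)\bigr\|_\infty\le t^2\Bigl\|\tfrac{\partial^4 g}{\partial z_i^2\,\partial z_k^2}\Bigr\|_\infty
\]
uniformly in $t\ge0$ and $\lambda\ge0$; this is precisely where $g\in C^{2,2,mix}$ enters, guaranteeing $\partial_{z_i}^2\partial_{z_k}^2 g\in C^b$. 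The same representation supplies the regularity of $u$ in $\lambda$ used below.

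To finish, for each $k$ set $\phi(s,\tau):=u(\lambda^0+s\eta_k+\tau h_k)$, $(s,\tau)\in[0,1]^2$, so $\Psi_k=\phi(1,1)-\phi(1,0)-\phi(0,1)+\phi(0,0)=\int_0^1\!\int_0^1\partial_s\partial_\tau\phi(s,\tau)\,\mathrm{d}s\,\mathrm{d}\tau$ by two applications of the fundamental theorem of calculus. The chain rule gives $\partial_s\partial_\tau\phi=\lambda_{s_k}\sum_{l<k}\lambda_{s_l}\,(\partial_{\lambda_{s_l}}\partial_{\lambda_{s_k}}u)(\lambda^0+s\eta_k+\tau h_k)$, so with the bound above $|\Psi_k|\le t^2\lambda_{s_k}\sum_{l<k}\lambda_{s_l}\,\|\partial^4 g/\partial z_{s_l}^2\partial z_{s_k}^2\|_\infty$. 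Summing over $k=2,\dots,M$ and relabelling the pairs $\{s_l,s_k\}$ as $\{i,k\}$ with $r<i<k\le N$ yields exactly (\ref{eq:BoundPDEDiff2_1}).

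The core idea — the telescoped grouping by largest active index, converting the entire higher-order tail of the anchored-ANOVA expansion into a sum of genuine \emph{first}-order mixed differences — is straightforward once found; I expect the main obstacle to be the technical justification that $u(z,t,\lambda)$ is twice continuously differentiable in $\lambda$ on the closed orthant, including the faces $\{\lambda_j=0\}$ where the Gaussian kernel degenerates, so that the double-integral representation of $\Psi_k$ is rigorous. This should follow by differentiating the explicit Gaussian convolution under the integral sign, using $\partial_{\lambda_k}(\text{kernel})=t\,\partial_{z_k}^2(\text{kernel})$ together with the continuity and boundedness of $\partial_{z_i}^2\partial_{z_k}^2 g$, possibly after a routine mollification of $g$ and a limiting argument. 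A minor second point is to check that $\Psi_k$ expands into exactly those $\Delta^{\alpha}u(\lambda^0)$ whose support has largest element $s_k$, so that the $\Psi_k$ account for every surviving term once.
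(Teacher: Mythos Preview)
Your proof is correct and takes a genuinely different route from the paper's. The paper works entirely at the PDE level: it writes down an inhomogeneous heat equation for $\hat u_{r,1}$ with source term $f=\sum_{k>r}\lambda_k\,\partial_{z_k}^2(u^{\{1,\dots,r,k\}}-u)$ (Lemma~\ref{lem:PDEerror1}), then derives a further inhomogeneous PDE for each $\partial_{z_k}^2(u^{\{1,\dots,r,k\}}-u)$, and finally applies the super-solution estimate of Lemma~\ref{rhs-lemma} twice (once to each nested PDE) to produce the factor $t^2$ and the mixed-derivative bounds. Your approach stays at the level of the anchored-ANOVA decomposition, regroups the tail by largest active index via a telescoping identity, and then differentiates directly in $\lambda$ using $\partial_{\lambda_k}u=t\,\partial_{z_k}^2u$ together with the $L^\infty$-contractivity of the heat semigroup. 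The paper explicitly notes (after equation~(\ref{eq:TaylorExpansion1,r}) in Section~\ref{subsec:taylor}) that the naive Taylor-in-$\lambda$ viewpoint leaves residual ``diagonal'' $\lambda_k^2$ terms which obstruct the clean bound; your telescoping sidesteps that obstruction and recovers the exact constant, so it makes the ANOVA/Taylor picture self-contained without recourse to the error-PDE machinery. The trade-off is that your argument leans on the explicit product structure $u=\prod_k e^{t\lambda_k\partial_{z_k}^2}g$, which is specific to the constant-coefficient heat equation, whereas the paper's comparison-principle route is deliberately set up with extensions to variable coefficients and nonlinear problems in mind. The regularity issue you flag (twice differentiability of $u$ in $\lambda$ up to the faces $\{\lambda_j=0\}$) is exactly what the paper records as Proposition~\ref{pro:smooth}, and your suggested justification via differentiation under the Gaussian-convolution integral is how it is proved there.
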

\begin{proof}
    See end of Section \ref{sec:FirstOrderFirstREigenvalues}.
\end{proof}
While published error bounds for ANOVA-type expansions tend to focus on $L_2$-type estimates of ANOVA terms
(see, e.g., \cite{wang08} or \cite{zhangetal12}), we use PDE theory to derive $L_\infty$ error bounds in terms of the mixed smoothness of the components not captured in the expansion.

\begin{rem}
    \label{rem:superposition1}
    There are no ``diagonal'' quadratic terms with factors $\lambda_k^2$ and univariate derivatives in (\ref{eq:BoundPDEDiff2_1}). 
    This has the important consequence that any solutions which only depend on one of the $z_k$ are integrated exactly (as the cross-derivatives vanish), and by superposition any linear combination of such terms. By similar reasoning (and as the second derivative of affine functions is zero), the error $\widehat{u}_{r,1}$ is zero for initial conditions of the form
    \begin{eqnarray*}
          g(z) = \sum_{k=r+1}^N \Bigg( g_k(z_1,\ldots,z_r,z_k) \;\; + \!\!
       \sum_{\scriptsize \begin{array}{c} i\!=\!r\!+\!1 \\i \!\neq\! k\end{array}}^N
      z_i \,  g_{i,k}(z_1,\ldots,z_r,z_k)\Bigg),
    \end{eqnarray*}
for any functions $g_k$ and $g_{i,k}$, $i,k=r+1,\ldots, N$, i.e., they are functions of $z_1,\ldots, z_r, z_k$ only; hence, the expression $(\ldots)$ is
affine in $z_j$, $j\notin \{1,\ldots,r,k\}$, but arbitrarily nonlinear as a function of $z_1,\ldots, z_r, z_k$.
If an initial condition can be approximated well by functions of this form, the expansion error $\widehat{u}_{r,1}$ will be small, irrespective of smoothness of the data or smallness of the eigenvalues and time. 
\end{rem}

The smoothness requirement $g \in C^{2,2,mix}$ can be considerably weakened using the following observations.
First, for the function $u^\nu$ from (\ref{eq:PDEDef4}, \ref{eq:PDEDef4boundary}), which only contains the diffusion terms relating to an index set $\nu \subseteq \{1,\ldots,N\}$,
	we have the Green's function representation
    \begin{eqnarray}\label{eq:PDEHeatSolutionNu}
        u^\nu(z,t) &=& \int_{\mathbb{R}^N}{\Phi^\nu(y,t) \, g(z-y) \; dy }, \\
        \Phi^\nu(y,t) &=& \prod_{k\in\nu}{ \frac{\exp( -{y_k^2}/{(4\lambda_k t)})}{\sqrt{4\pi \lambda_k t}} } \prod_{k\in\{1,\ldots,N\}\setminus\nu}{\delta(y_k)},\label{eq:HeatGreensNdimNu}
    \end{eqnarray}
	where $\delta(\cdot)$ is the Dirac delta function.
 We can therefore replace the initial condition $g$ in the estimate (\ref{eq:BoundPDEDiff2_1}) with a function $G$ which has been smoothed by the heat kernel in the first $r$ directions, namely, for $\nu = \{1,\ldots, r\}$,
\begin{eqnarray}\label{eq:DefG}
G(z,t) 
= u^{\nu}(z,t) =
\int_{\mathbb{R}^r} \prod_{k=1}^{r}{ \frac{\exp( -{y_k^2}/{(4\lambda_k t)})}{\sqrt{4\pi \lambda_k t}} } g(z_1-y_1,\ldots,z_r-y_r,z_{r+1},\ldots,z_n)  \; dy_1 \ldots dy_r.
\end{eqnarray}
The smoothness of $G$
is all that is relevant for the expansion error. Thus, even if $g$ is only, e.g., piecewise smooth, it is only in degenerate cases that $G$ is not smooth everywhere. We analyse this in Section \ref{subsec:existderiv} and, in particular, give illustrative examples in Appendix \ref{subsec:exnonsmooth}.
    
\begin{cor} \label{cor:BoundPDEDiff2}
Assume $G(\cdot,t) \in C^{2,2,mixed}$ (with $G$ from (\ref{eq:DefG})).
Then the expansion error $\hat{u}_{r,1}$ 
satisfies
\begin{eqnarray}
		\left\| \hat{u}_{r,1}(\cdot,t) \right\|_\infty &\leq& 
		t^2 \sum_{r<i<k\le N}  { \lambda_k \lambda_i \left\| \frac{\partial^4 G}{\partial z_k^2\partial z_i^2} (\cdot,t) \right\|_\infty }.
\end{eqnarray}
\end{cor}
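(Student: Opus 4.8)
The plan is to reduce the statement to a direct application of Theorem~\ref{thm:BoundPDEDiff2}, with the (possibly non-smooth) data $g$ replaced by the partially diffused function $G(\cdot,t)$ and the ``base'' index set $\{1,\ldots,r\}$ collapsed to the empty set. First I would record a semigroup identity for the terms appearing in $u_{r,1}$. Every index set occurring in (\ref{ur1}) contains $\{1,\ldots,r\}$; writing such a set as $\{1,\ldots,r\}\cup\mu$ with $\mu\subseteq\{r+1,\ldots,N\}$, the product form of the Green's function (\ref{eq:HeatGreensNdimNu}) together with Fubini's theorem gives, for fixed $t>0$,
\begin{eqnarray*}
u^{\{1,\ldots,r\}\cup\mu}(z,t) &=& \int_{\mathbb{R}^{|\mu|}} \prod_{k\in\mu}\frac{\exp(-y_k^2/(4\lambda_k t))}{\sqrt{4\pi\lambda_k t}}\; G\Big(z-{\textstyle\sum_{k\in\mu}}y_k e_k,\,t\Big)\;\prod_{k\in\mu}{\rm d}y_k,
\end{eqnarray*}
i.e.\ $u^{\{1,\ldots,r\}\cup\mu}(\cdot,t)$ arises from $G(\cdot,t)$ by applying the heat semigroup in the directions $\mu$ for time $t$ and the identity in the remaining directions. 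In particular $u^{\{1,\ldots,r\}}(\cdot,t)=G(\cdot,t)$, and, splitting off the first $r$ directions in the same way in (\ref{eq:PDEHeatSolutionNu}), $u(\cdot,t)$ arises from $G(\cdot,t)$ by applying the heat semigroup in all directions $r+1,\ldots,N$ for time $t$.

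Next I would use this to rewrite the expansion error. Freezing $t$ and introducing an auxiliary time variable $s\in[0,t]$, the identity above shows that $u_{r,1}(\cdot,t)$ and $u(\cdot,t)$ are, respectively, the first order expansion (in the sense of (\ref{ur1}) with empty base set) and the exact solution $w(\cdot,s)$, both evaluated at $s=t$, of the reduced heat equation
\begin{eqnarray*}
\frac{\partial w}{\partial s}=\sum_{k=r+1}^N\lambda_k\frac{\partial^2 w}{\partial z_k^2},\qquad w(\cdot,0)=G(\cdot,t),
\end{eqnarray*}
in which $z_1,\ldots,z_r$ enter only as passive parameters. The identification is consistent because both $\Phi^\nu$ in (\ref{eq:HeatGreensNdimNu}) and $G$ in (\ref{eq:DefG}) are taken at the same time $t$, so the reduced problem run for time $t$ reproduces exactly the terms in (\ref{ur1}). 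Hence $\hat u_{r,1}(\cdot,t)$ from (\ref{eq:expan-err}) equals the first order expansion error of this reduced $(N-r)$-dimensional problem.

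Finally I would apply Theorem~\ref{thm:BoundPDEDiff2} to the reduced problem: after relabelling the directions $r+1,\ldots,N$ and the eigenvalues $\lambda_{r+1},\ldots,\lambda_N$, the hypothesis required is precisely $G(\cdot,t)\in C^{2,2,mixed}$ (uniformly in the parameters $z_1,\ldots,z_r$, which is what the assumption encodes), and the bound (\ref{eq:BoundPDEDiff2_1}), rewritten in the original indices and with the supremum taken over $z_1,\ldots,z_r$ as well, is exactly the claimed estimate; note that only derivatives of the data in the tail directions $r<i<k\le N$ appear, so no regularity of $G(\cdot,t)$ in the first $r$ directions is needed. The conceptual content sits entirely in the Fubini/semigroup composition of the first step; the point I expect to need care with is the bookkeeping — checking that the frozen-$t$ reduced problem, run for time $t$ rather than some other time, reproduces $u(\cdot,t)$ and each $u^\nu(\cdot,t)$ in (\ref{ur1}) with the correct eigenvalues, and confirming that Theorem~\ref{thm:BoundPDEDiff2} is applicable with an empty base set.
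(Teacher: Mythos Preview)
Your proposal is correct and follows essentially the same route as the paper: introduce the reduced $(N-r)$-dimensional heat equation in the directions $r+1,\ldots,N$ with initial data $G(\cdot,t)$, identify its exact solution at time $t$ with $u(\cdot,t)$ and its first-order expansion (with empty base set) with $u_{r,1}(\cdot,t)$, and then invoke Theorem~\ref{thm:BoundPDEDiff2} with the substitutions $g\to G(\cdot,t)$, $N\to N-r$, $\lambda_i\to\lambda_{i+r}$, $r\to 0$. Your explicit Fubini/semigroup justification of the identification is more detailed than the paper's, which simply states the identities, but the argument is the same.
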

\begin{proof}
We first note that for fixed $t$, the solution $v(x,t';t)$ of
\begin{eqnarray*}
\frac{\partial v}{\partial t'} &=& \sum_{k=r+1}^N \lambda_{k} \frac{\partial^2 v}{\partial z_k^2}, \\
v(\cdot,0) &=& G(\cdot,t)
 \end{eqnarray*}
agrees with $u$ for $t'=t$, $v(\cdot,t;t) = u(\cdot,t)$. Moreover, 
from (\ref{ur1}),
\begin{eqnarray*}
u_{r,1} = (1+r-N) v^{\{\}} + \sum_{k=r+1}^N v^{\{k\}} = v_{0,1},
\end{eqnarray*}
where $v^{\{k\}}$ is the solution to
	 	\begin{eqnarray*}
\frac{\partial v^{\{k\}}}{\partial t'} &=& \lambda_{k} \frac{\partial^2 v^{\{k\}}}{\partial z_k^2}, \\
v^{\{k\}}(\cdot,0;t) &=& G(\cdot,t).
 	\end{eqnarray*}
Application of Theorem \ref{thm:BoundPDEDiff2}, with replacements $u\to v$, $g\to G$, $N\to N-r$, 
$\lambda_i \to \lambda_{i+r}$, 
$r\to 0$
gives the result.
\end{proof}



	\section{Error bounds for first and second order expansions} \label{sec:ErrorBounds}

	Here, we provide a proof of Theorem \ref{thm:BoundPDEDiff2} and its second order extension, Theorem \ref{thm:BoundPDEDiff3}.
	Specifically, we derive error bounds in terms of the eigenvalues $\lambda_i$, time $t$, and derivatives of the initial condition. 
	We first state some auxiliary results.
	
	\subsection{Equations for the expansion error}
	We start by formulating PDEs for the expansion error, which will allow us to use standard PDE estimates to bound error terms.

	 	\begin{lem}
		\label{lem:PDEerror}
		The expansion error $\hat{u}^\xi$ from (\ref{eq:PDEDef5},\ref{eq:expan-err}) is the solution of 
	    \begin{eqnarray}\label{eq:PDEuhatxi1}
	        \frac{\partial}{\partial t} \hat{u}^\xi &=& \mathcal{L}^{\nu_\xi} \hat{u}^\xi + f 
		 \quad \text{in } \mathbb{R}^N \times (0,T), \\ 
			\hat{u}^\xi(\cdot,0) &=& \left(\sum_{(w,\nu)\in\xi}w - 1\right) g \quad \text{in } \mathbb{R}^N, 
	    \end{eqnarray}
		with source term
	    \begin{eqnarray*}
	        f &=& \sum_{(w,\nu)\in\xi}w\left[ \mathcal{L}^{\nu} - \mathcal{L}^{\nu_\xi} \right]u^\nu + 
	         \left[\mathcal{L}^{\nu_\xi}-\mathcal{L}\right] u,
	    \end{eqnarray*}
        where $\nu_\xi \subseteq \{1,\ldots,N\}$ arbitrary. 
	\end{lem}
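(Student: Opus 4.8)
The plan is to verify directly that the claimed PDE and initial condition hold by exploiting linearity and the defining equations of the building blocks $u^\nu$ and $u$. The key observation is that both $u^\nu$ (from Definition~\ref{def:unu}) and $u$ (from (\ref{eq:PDEHeat})) satisfy parabolic equations, and $\hat{u}^\xi = u^\xi - u = \sum_{(w,\nu)\in\xi} w\, u^\nu - u$ is a finite linear combination. So I would start from
\begin{eqnarray*}
\frac{\partial}{\partial t}\hat{u}^\xi = \sum_{(w,\nu)\in\xi} w\, \mathcal{L}^\nu u^\nu - \mathcal{L} u,
\end{eqnarray*}
and the whole proof is a matter of adding and subtracting the operator $\mathcal{L}^{\nu_\xi}$ applied to the right combination of functions so as to reconstruct $\mathcal{L}^{\nu_\xi}\hat{u}^\xi$ plus a remainder.

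Concretely, first I would rewrite each term $\mathcal{L}^\nu u^\nu = \mathcal{L}^{\nu_\xi} u^\nu + (\mathcal{L}^\nu - \mathcal{L}^{\nu_\xi}) u^\nu$ and the term $\mathcal{L} u = \mathcal{L}^{\nu_\xi} u + (\mathcal{L} - \mathcal{L}^{\nu_\xi}) u$. Substituting these into the expression for $\partial_t \hat{u}^\xi$ and collecting the $\mathcal{L}^{\nu_\xi}$-terms gives
\begin{eqnarray*}
\mathcal{L}^{\nu_\xi}\Big(\sum_{(w,\nu)\in\xi} w\, u^\nu\Big) - \mathcal{L}^{\nu_\xi} u = \mathcal{L}^{\nu_\xi}\big(u^\xi - u\big) = \mathcal{L}^{\nu_\xi}\hat{u}^\xi,
\end{eqnarray*}
using linearity of $\mathcal{L}^{\nu_\xi}$ and Definition~\ref{def:uxi}. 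The leftover terms are exactly $f = \sum_{(w,\nu)\in\xi} w\,[\mathcal{L}^\nu - \mathcal{L}^{\nu_\xi}] u^\nu + [\mathcal{L}^{\nu_\xi} - \mathcal{L}] u$, matching the statement. For the initial condition, I would simply evaluate at $t=0$: since $u^\nu(\cdot,0) = g$ for every $\nu$ and $u(\cdot,0) = g$, we get $\hat{u}^\xi(\cdot,0) = \big(\sum_{(w,\nu)\in\xi} w\big) g - g = \big(\sum_{(w,\nu)\in\xi} w - 1\big) g$, as required.

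The remaining point is to justify that $\hat{u}^\xi$ is genuinely the (unique) solution of the stated initial-value problem, not merely that it formally satisfies the equation. Here I would invoke the regularity/growth assumptions implicit in the setting (the solutions $u$ and $u^\nu$ are given by the Green's function representations (\ref{eq:PDEHeatSolutionNu})--(\ref{eq:HeatGreensNdimNu}) convolved with bounded data $g$, hence are smooth for $t>0$ and bounded), so all the differentiations above are legitimate and the source term $f$ is well-defined; uniqueness in the appropriate class of bounded solutions for the parabolic problem with operator $\mathcal{L}^{\nu_\xi}$ then pins down $\hat{u}^\xi$. I do not expect any real obstacle here: the lemma is essentially bookkeeping with linear operators, and the only thing to be careful about is that $\nu_\xi$ is arbitrary, which is precisely why the compensating terms $(\mathcal{L}^\nu - \mathcal{L}^{\nu_\xi})u^\nu$ and $(\mathcal{L}^{\nu_\xi} - \mathcal{L})u$ appear — the choice of $\nu_\xi$ will later be optimised (typically $\nu_\xi = \{r+1,\ldots,N\}$ or similar) to make $f$ as small and as tractable as possible. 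The mild subtlety is ensuring the manipulations respect the distributional nature of $\Phi^\nu$ when $\nu \neq \{1,\ldots,N\}$, but since we only ever differentiate in $t$ and in the spatial directions, this causes no difficulty.
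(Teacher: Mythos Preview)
Your proof is correct and follows the same approach as the paper: start from $\partial_t \hat{u}^\xi = \sum_{(w,\nu)\in\xi} w\,\mathcal{L}^\nu u^\nu - \mathcal{L} u$, add and subtract $\mathcal{L}^{\nu_\xi}$ acting on the relevant functions, and regroup. One incidental remark: the choice of $\nu_\xi$ the paper has in mind later is $\nu_\xi = \bigcap_{(w,\nu)\in\xi}\nu = \{1,\ldots,r\}$, not $\{r+1,\ldots,N\}$, but this does not affect the argument here.
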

	\begin{proof}
	See Appendix \ref{app:PDEerror}.
	\end{proof}
	
Although Lemma \ref{lem:PDEerror} holds for all index sets $\nu_\xi$, we have in mind $\nu_\xi = \bigcap_{(w,\nu)\in\xi}\nu$
for which we will apply it later.
We will use the following simple result repeatedly.
\begin{lem} \label{rhs-lemma}
Let $f \in C^b$ and let $u$ be a classical 
solution to $\partial u\!\,/\!\,\partial t - \mathcal{L} u = f$, $u(\cdot,0)=0$, where $\mathcal{L}$ is any second order linear elliptic operator (possibly degenerate), with no zero-order terms, and $|f|\le C t^p$
everywhere, for some constant $C>0$ and $p\ge 0$. Then
\begin{eqnarray*}
|u| \le \frac{C}{p+1} t^{p+1}.
\end{eqnarray*}
\end{lem}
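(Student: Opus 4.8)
The plan is to prove Lemma~\ref{rhs-lemma} by a maximum-principle comparison argument, building an explicit supersolution out of the monomial $t^{p+1}$. Since $\mathcal{L}$ is a second order linear elliptic operator with no zero-order terms, it annihilates functions that are constant in the spatial variables, so a function depending on $t$ alone has $\partial_t w - \mathcal{L} w = w'(t)$. This makes $t^{p+1}$ a natural building block.

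First I would set $w(t) = \frac{C}{p+1}\, t^{p+1}$ and compute $\partial_t w - \mathcal{L} w = C t^p \ge |f|$ everywhere in $\mathbb{R}^N\times(0,T)$, with $w(\cdot,0)=0 = u(\cdot,0)$. Then I would consider the difference $\phi := w - u$, which is a classical solution of $\partial_t \phi - \mathcal{L}\phi = C t^p - f \ge 0$ with $\phi(\cdot,0)=0$. Because $u\in C^b$ is bounded and $w$ is bounded on $[0,T]$, $\phi$ is bounded; applying the maximum principle for (possibly degenerate) parabolic operators with no zero-order term on the whole space — in the form that a bounded classical subsolution of the homogeneous equation with nonpositive source, wait, here we have a bounded classical solution of an equation with \emph{nonnegative} source and zero initial data — yields $\phi \ge 0$, i.e.\ $u \le w = \frac{C}{p+1} t^{p+1}$. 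Running the same argument with $-u$ in place of $u$ (equivalently comparing against $-w$) gives $u \ge -\frac{C}{p+1} t^{p+1}$, and combining the two bounds gives $|u| \le \frac{C}{p+1} t^{p+1}$ as claimed.

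The main obstacle, and the only point requiring care, is the justification of the comparison principle on the unbounded domain $\mathbb{R}^N$ for a \emph{degenerate} elliptic operator. For nondegenerate uniformly parabolic operators the Phragmén--Lindelöf-type maximum principle for bounded solutions is standard. In the degenerate case one can argue either (i) by noting that the relevant operators here are of heat-equation type with constant coefficients, so $\phi$ admits a Green's-function (heat-kernel, possibly with Dirac factors in the degenerate directions as in~(\ref{eq:HeatGreensNdimNu})) representation $\phi(z,t) = \int_0^t \int \Phi(y,t-s)\,(Cs^p - f)(z-y,s)\,dy\,ds \ge 0$ directly, since the kernel $\Phi$ is a nonnegative measure and the integrand is nonnegative; or (ii) by a standard localisation: compare on a large ball $B_R$, add a small correction term $\varepsilon(|z|^2 + \kappa t)$ with $\kappa$ chosen so that $\partial_t - \mathcal{L}$ applied to it is positive (possible since $\mathcal{L}$ has bounded, here constant, coefficients), control the boundary contribution using boundedness of $u$, and let $R\to\infty$ and then $\varepsilon\to 0$. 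Either route is routine given the constant-coefficient structure in which the lemma will be applied, so I would simply invoke the representation~(\ref{eq:PDEHeatSolutionNu})--(\ref{eq:HeatGreensNdimNu}) (Duhamel form) and the nonnegativity of the heat kernel to conclude, remarking that the same conclusion holds for any second order linear elliptic operator without zero-order terms by the classical maximum principle for bounded solutions.
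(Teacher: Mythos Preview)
Your approach is essentially identical to the paper's: the paper's proof is the single sentence ``The right-hand side above is a super-solution,'' i.e.\ exactly your comparison with $w(t)=\frac{C}{p+1}t^{p+1}$. Your additional discussion of how to justify the comparison on $\mathbb{R}^N$ for degenerate operators (via the nonnegative Green's kernel~(\ref{eq:HeatGreensNdimNu}) or a localisation argument) is more detail than the paper provides, but entirely in the same spirit.
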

\begin{proof}
The right-hand side above is a super-solution.
\end{proof}

	\subsection{Error bounds for a first order expansion} \label{sec:FirstOrderFirstREigenvalues}
	
	Consider the first order approximation $u_{r,1}$ from (\ref{eqn:truncerr}), which has the expansion $\xi$ from Example
	\ref{ex-first-order}. 
    We will now derive bounds via the PDE for the error itself, as introduced in Section \ref{sec:Expansion}.	
	\begin{lem}
	\label{lem:PDEerror1}
		For the expansion given in equation (\ref{xi-first}), the expansion error $\hat{u}^\xi$ from (\ref{eq:expan-err})
		is the solution of 
	    \begin{eqnarray}\label{eq:PDEuhatxi3}
	        \frac{\partial}{\partial t} \hat{u}^\xi &=& \sum_{k=1}^r{ \lambda_k\frac{\partial^2}{\partial z_k^2}}\hat{u}^\xi + f
	        \quad \text{in } \mathbb{R}^N \times (0,T), \\
			\hat{u}^\xi(\cdot,0) &=& 0 \quad \text{in } \mathbb{R}^N,
			\label{eq:PDEuhatxi3Boundary}
	    \end{eqnarray}
		with source term
	    \begin{eqnarray}\label{eq:PDEuhatxi3Source}
	        f &=& \sum_{k=r+1}^N{ \lambda_k\frac{\partial^2}{\partial z_k^2} \left[ u^{\{1,\ldots,r,k\}}-u \right]}.
	    \end{eqnarray}
	\end{lem}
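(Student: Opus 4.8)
The plan is to obtain this as a direct specialisation of Lemma~\ref{lem:PDEerror} to the expansion $\xi$ from (\ref{xi-first}), made with the particular choice $\nu_\xi = \bigcap_{(w,\nu)\in\xi}\nu$ anticipated in the remark following that lemma. First I would record the ingredients of $\xi$: it consists of the pair $(1+r-N,\{1,\ldots,r\})$ together with the pairs $(1,\{1,\ldots,r,k\})$ for $k=r+1,\ldots,N$. Every index set occurring in $\xi$ contains $\{1,\ldots,r\}$, and no index outside $\{1,\ldots,r\}$ lies in all of them, so $\nu_\xi=\bigcap_{(w,\nu)\in\xi}\nu = \{1,\ldots,r\}$ and hence $\mathcal{L}^{\nu_\xi} = \sum_{k=1}^r \lambda_k \partial^2/\partial z_k^2$, which is exactly the operator on the right-hand side of (\ref{eq:PDEuhatxi3}).

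Next I would dispose of the initial data by computing $\sum_{(w,\nu)\in\xi} w - 1 = (1+r-N) + (N-r)\cdot 1 - 1 = 0$, so that the right-hand side of the initial condition in Lemma~\ref{lem:PDEerror} vanishes and (\ref{eq:PDEuhatxi3Boundary}) follows. For the source term I would evaluate
\[
f = \sum_{(w,\nu)\in\xi} w\left[\mathcal{L}^{\nu} - \mathcal{L}^{\nu_\xi}\right]u^\nu + \left[\mathcal{L}^{\nu_\xi}-\mathcal{L}\right] u
\]
term by term. The pair with $\nu = \{1,\ldots,r\} = \nu_\xi$ contributes nothing, since there $\mathcal{L}^\nu - \mathcal{L}^{\nu_\xi} = 0$ (which is also why the large weight $1+r-N$ never appears in the source term). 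For $\nu = \{1,\ldots,r,k\}$ with $k>r$ one has $\mathcal{L}^\nu - \mathcal{L}^{\nu_\xi} = \lambda_k \partial^2/\partial z_k^2$, while $\mathcal{L}^{\nu_\xi} - \mathcal{L} = -\sum_{k=r+1}^N \lambda_k \partial^2/\partial z_k^2$; collecting the terms with the same index $k$ gives $\lambda_k \partial^2/\partial z_k^2\bigl[u^{\{1,\ldots,r,k\}} - u\bigr]$, and summing over $k=r+1,\ldots,N$ yields (\ref{eq:PDEuhatxi3Source}).

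I do not expect a genuine obstacle here: the statement is essentially bookkeeping on top of Lemma~\ref{lem:PDEerror}. The only point requiring a little care is that the algebraic manipulation inherited from Lemma~\ref{lem:PDEerror} presupposes that $u$ and each $u^\nu$ are classical solutions to which the operators $\mathcal{L}$, $\mathcal{L}^\nu$, $\mathcal{L}^{\nu_\xi}$ can legitimately be applied; this regularity is guaranteed under the standing smoothness hypotheses on the data (or, via the Green's function representation (\ref{eq:PDEHeatSolutionNu})--(\ref{eq:HeatGreensNdimNu}), after smoothing in the first $r$ directions), so I would simply note that the identities hold in the classical sense under the assumptions with which the lemma is subsequently invoked.
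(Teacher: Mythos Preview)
Your proposal is correct and follows essentially the same approach as the paper: apply Lemma~\ref{lem:PDEerror} with $\nu_\xi=\{1,\ldots,r\}$, note that the $\nu=\{1,\ldots,r\}$ term in the source vanishes since $\mathcal{L}^\nu-\mathcal{L}^{\nu_\xi}=0$ there, and collect the remaining contributions to obtain (\ref{eq:PDEuhatxi3Source}). Your explicit check that the weights sum to $1$ (giving zero initial data) and your remark on regularity are slightly more detailed than the paper's own proof, but the argument is the same.
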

	\begin{proof}
See Appendix \ref{app:PDEerror1}.
	\end{proof}
	
We can now give a proof of Theorem \ref{thm:BoundPDEDiff2}.
    
	\begin{proof}[Proof (of Theorem \ref{thm:BoundPDEDiff2}).]
		Set $\hat{u}^{\{1,\ldots,r,k\}} = u^{\{1,\ldots,r,k\}}-u$. Then, taking the difference between (\ref{eq:PDEHeat}) and (\ref{eq:PDEDef4}), 
	    \begin{eqnarray}
			\frac{\partial}{\partial t}\hat{u}^{\{1,\ldots,r,k\}} &=& \sum_{i\in\{1,\ldots,r,k\}}\lambda_i\frac{\partial^2}{\partial z_i^2}u^{\{1,\ldots,r,k\}} - \sum_{i=1}^N\lambda_i\frac{\partial^2}{\partial z_i^2}u \nonumber\\
			&=& \sum_{i\in\{1,\ldots,r,k\}}\lambda_i\frac{\partial^2}{\partial z_i^2}u^{\{1,\ldots,r,k\}} - \sum_{i \in \{1,\ldots, r, k\}} \lambda_i\frac{\partial^2}{\partial z_i^2}u - \sum_{i \notin \{1,\ldots, r, k\}}\lambda_i\frac{\partial^2}{\partial z_i^2}u \nonumber\\
			&=& \sum_{i\in\{1,\ldots,r,k\}}\lambda_i\frac{\partial^2}{\partial z_i^2} ({u}^{\{1,\ldots,r,k\}}-u)  - 
			 \sum_{\scriptsize \begin{array}{c} i\!=\!r\!+\!1 \\i \!\neq\! k\end{array}}^N
			\lambda_i\frac{\partial^2}{\partial z_i^2}u, \nonumber \\
			&=& \sum_{i\in\{1,\ldots,r,k\}}\lambda_i\frac{\partial^2}{\partial z_i^2}\hat{u}^{\{1,\ldots,r,k\}} - 
			 \sum_{\scriptsize \begin{array}{c} i\!=\!r\!+\!1 \\i \!\neq\! k\end{array}}^N
			\lambda_i\frac{\partial^2}{\partial z_i^2}u, \label{laststep}
	    \end{eqnarray}
		for all $(z,t)\in\mathbb{R}^N\times(0,T]$, with zero initial condition.
Gleaning at (\ref{eq:PDEuhatxi3Source}), we need the second $k$-derivative of
$\hat{u}^{\{1,\ldots,r,k\}}$, but this is itself the solution to the inhomogeneous heat equation
	    \begin{eqnarray}
			\frac{\partial}{\partial t}\left( \frac{\partial^2}{\partial z_k^2} \hat{u}^{\{1,\ldots,r,k\}} 
			\right)
			&=& \sum_{i\in\{1,\ldots,r,k\}}\lambda_i\frac{\partial^2}{\partial z_i^2} \left( \frac{\partial^2}{\partial z_k^2} \hat{u}^{\{1,\ldots,r,k\}}\right) - 
			 \sum_{\scriptsize \begin{array}{c} i\!=\!r\!+\!1 \\i \!\neq\! k\end{array}}^N
			\lambda_i\frac{\partial^4}{\partial z_k^2 \partial z_i^2}u, \label{newstep}
	    \end{eqnarray}
as is seen by differentiating (\ref{laststep}) twice each with respect to $z_i$ and $z_k$.
In turn, by differentiation of (\ref{eq:PDEHeat}) twice with respect to $z_k$, each of the mixed $u$ derivatives on the right-hand side of
(\ref{newstep}) satisfies a heat equation with zero right-hand
side and initial condition
\[
\frac{\partial^4 g}{\partial z_k^2\partial z_i^2},
\]
so that the norm of the solution is bounded by the norm of this initial condition (this follows directly from the Green's function
representation (\ref{eq:PDEHeatSolutionNu}) of the solution to this PDE
or the weak maximum principle for parabolic PDEs, see e.g.\ Theorem 8 in Chapter 7 of \cite{evans}).
Applying Lemma \ref{rhs-lemma} twice gives the result.

	\end{proof}


	\subsection{Error bounds for a second order expansion}
	\label{subsec:second_order}
    
We now consider the second order expansion, $u_{r,2}$ from (\ref{eqn:truncexp}).
    
    \begin{ex}\label{pro:SecondOrderExp}
    The second order expansion has the form $u_{r,2}=u^\xi$, where
    \begin{eqnarray}\label{eq:xiTaylorExpansion2,r}
    	\xi &=& \{ (1 + (N-r)(N-r-3)/2,\{ 1,\ldots,r\}),\nonumber\\
        && (2-(N-r),\{ 1,\ldots,r,r+1\}), \ldots, (2-(N-r),\{ 1,\ldots,r,N\}),  \nonumber\\
        && (1,\{ 1,\ldots,r,r+1,r+2\}),(1,\{ 1,\ldots,r,r+1,r+3\}), \ldots, \nonumber\\
        && (1,\{ 1,\ldots,r,N-2,N\}),(1,\{ 1,\ldots,r,N-1,N\}) \}.
	\end{eqnarray}
    \end{ex}
	
    \begin{rem}
    Computing $u^\xi$ according to (\ref{eq:xiTaylorExpansion2,r}) requires the computation of one $r$-dimensional PDE, $N-r$ of $(r+1)$-dimensional PDEs and $(N-r)(N-r-1)/2$ of $(r+2)$-dimensional PDEs. Compared to the first order scheme, this adds one dimension to the highest dimensional PDEs and increases their number by a factor $O(N-r)$. In return, the approximation order increases by one.
    \end{rem}

    	\begin{lem}
	\label{lem:BoundPDEDiff3}
		For the expansion given in equation (\ref{eq:xiTaylorExpansion2,r}), the expansion error $\hat{u}^\xi$ from (\ref{eq:expan-err}) 
		is a solution of  (\ref{eq:PDEuhatxi3}--\ref{eq:PDEuhatxi3Boundary}),
	but now with source term
	    \begin{eqnarray}\label{eq:PDEuhatxi4Source}
	        f &=& - \sum_{k=r+1}^N \sum_{\scriptsize \begin{array}{c} l\!=\!r\!+\!1 \\l \!\neq\! k\end{array}}^N \sum_{\scriptsize \begin{array}{c} i\!=\!r\!+\!1 \\i\!\neq\! k,\!l\end{array}}^N
	        {{\lambda_i \lambda_k \frac{\partial^2}{\partial z_i^2} \frac{\partial^2}{\partial z_k^2} \left[ \tilde{u}^{\{1,\ldots,r,l,k\}}-\tilde{u}^{\{1,\ldots,r,k\}} \right]}},
	    \end{eqnarray}
	    where $\tilde{u}^\nu$ is the solution to
	     \begin{eqnarray}\label{eq:auxPDE}
	    \frac{\partial \tilde{u}^\nu}{\partial t} - \mathcal{L}^\nu \tilde{u}^\nu = u, \qquad \tilde{u}^\nu(\cdot,0) = 0.
	    \end{eqnarray}
	\end{lem}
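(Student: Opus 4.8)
The plan is to obtain the statement from Lemma~\ref{lem:PDEerror} applied with $\nu_\xi=\{1,\ldots,r\}$, the common intersection of all index sets in the expansion~(\ref{eq:xiTaylorExpansion2,r}), and then to re-express the resulting source term through the auxiliary functions $\tilde{u}^\nu$ of~(\ref{eq:auxPDE}). First I would check that the weights in~(\ref{eq:xiTaylorExpansion2,r}) sum to one: with $n:=N-r$ the single $r$-dimensional term contributes $1+n(n-3)/2$, the $n$ terms of dimension $r+1$ contribute $n(2-n)$ in total, and the $n(n-1)/2$ terms of dimension $r+2$ contribute $n(n-1)/2$, and indeed $1+n(n-3)/2+n(2-n)+n(n-1)/2=1$. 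Hence the initial datum in Lemma~\ref{lem:PDEerror} vanishes, so $\hat{u}^\xi$ solves~(\ref{eq:PDEuhatxi3})--(\ref{eq:PDEuhatxi3Boundary}) with
\begin{eqnarray*}
f \;=\; \sum_{(w,\nu)\in\xi} w\,\big[\mathcal{L}^\nu-\mathcal{L}^{\{1,\ldots,r\}}\big]u^\nu \;-\; \sum_{k=r+1}^N \lambda_k\frac{\partial^2}{\partial z_k^2}u .
\end{eqnarray*}
Since $\mathcal{L}^\nu-\mathcal{L}^{\{1,\ldots,r\}}=\sum_{j\in\nu\setminus\{1,\ldots,r\}}\lambda_j\frac{\partial^2}{\partial z_j^2}$ for every $\nu$ in~(\ref{eq:xiTaylorExpansion2,r}), inserting the explicit index sets and weights and symmetrising the sum over unordered pairs $\{k,l\}$ turns this into
\begin{eqnarray*}
f \;=\; \sum_{k=r+1}^N \lambda_k\frac{\partial^2}{\partial z_k^2}\left[(2-n)\,u^{\{1,\ldots,r,k\}} + \sum_{\substack{l=r+1\\ l\neq k}}^N u^{\{1,\ldots,r,l,k\}} - u\right].
\end{eqnarray*}

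The key ingredient is then the identity
\begin{eqnarray*}
u^\nu-u \;=\; -\sum_{i\in\{1,\ldots,N\}\setminus\nu}\lambda_i\frac{\partial^2}{\partial z_i^2}\,\tilde{u}^\nu ,\qquad \{1,\ldots,r\}\subseteq\nu ,
\end{eqnarray*}
valid whenever the data are smooth enough for all the functions to be classical solutions. I would prove it by noting that both sides vanish at $t=0$ and solve $\partial_t w-\mathcal{L}^\nu w=-\sum_{i\notin\nu}\lambda_i\frac{\partial^2}{\partial z_i^2}u$: for $u^\nu-u$ this is the difference of $\partial_t u^\nu=\mathcal{L}^\nu u^\nu$ and $\partial_t u=\mathcal{L}u$, and for the right-hand side it follows by differentiating~(\ref{eq:auxPDE}) and using that constant-coefficient operators commute with $\frac{\partial^2}{\partial z_i^2}$; uniqueness for the (possibly degenerate) parabolic Cauchy problem then gives equality.

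It remains to insert this identity into the bracket above (with $l,i$ ranging over $r+1,\ldots,N$ throughout). Applying it to $u^{\{1,\ldots,r,k\}}-u$ and to $u^{\{1,\ldots,r,l,k\}}-u$, the three $u$-contributions cancel because the prefactors $2-n$, the number $n-1$ of admissible indices $l$, and $-1$ add to zero, leaving $(2-n)\,\hat{u}^{\{1,\ldots,r,k\}}+\sum_{l\neq k}\hat{u}^{\{1,\ldots,r,l,k\}}$ where $\hat{u}^\nu:=u^\nu-u$. Writing this as $\hat{u}^{\{1,\ldots,r,k\}}+\sum_{l\neq k}\big(\hat{u}^{\{1,\ldots,r,l,k\}}-\hat{u}^{\{1,\ldots,r,k\}}\big)$ and expanding each term via the identity, the ``single-$\lambda$'' pieces collapse: $\hat{u}^{\{1,\ldots,r,k\}}=-\sum_{i\neq k}\lambda_i\frac{\partial^2}{\partial z_i^2}\tilde{u}^{\{1,\ldots,r,k\}}$ exactly cancels the term $\sum_{l\neq k}\lambda_l\frac{\partial^2}{\partial z_l^2}\tilde{u}^{\{1,\ldots,r,k\}}$ produced by the differences, and what survives is
\begin{eqnarray*}
-\sum_{\substack{l=r+1\\ l\neq k}}^N\;\sum_{\substack{i=r+1\\ i\neq k,l}}^N \lambda_i\frac{\partial^2}{\partial z_i^2}\frac{\partial^2}{\partial z_k^2}\big[\tilde{u}^{\{1,\ldots,r,l,k\}}-\tilde{u}^{\{1,\ldots,r,k\}}\big].
\end{eqnarray*}
Multiplying by $\lambda_k\frac{\partial^2}{\partial z_k^2}$ and summing over $k=r+1,\ldots,N$ reproduces the claimed source term~(\ref{eq:PDEuhatxi4Source}).

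I expect the conceptual content to be light and the only real difficulty to be bookkeeping: keeping the excluded-index sets correct under the repeated substitutions, handling the symmetrisation of the pair sum, and verifying that both the scalar coefficients and the spurious single-$\lambda$ terms telescope away. It would be prudent to check these cancellations directly in the smallest cases $N-r=2$ and $N-r=3$ before committing to the general combinatorial argument.
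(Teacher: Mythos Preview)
Your proposal is correct and follows the same overall route as the paper: apply Lemma~\ref{lem:PDEerror} with $\nu_\xi=\{1,\ldots,r\}$, reduce the source term to the bracket $(2-n)\,u^{\{1,\ldots,r,k\}}+\sum_{l\neq k}u^{\{1,\ldots,r,l,k\}}-u$, and then re-express everything through $\tilde{u}^\nu$. The one substantive difference is in this last step: the paper writes $\hat{u}^{\{1,\ldots,r,k\}}$ and $\hat{u}^{\{1,\ldots,r,k,l\}}$ as explicit Green's-function integrals of $\sum\lambda_i\partial_{z_i}^2 u$, counts that both resulting lines contain $(N-r)(N-r-1)(N-r-2)$ terms, and then recognises the combined integrals as derivatives of $\tilde{u}^\nu$; your identity $u^\nu-u=-\sum_{i\notin\nu}\lambda_i\,\partial_{z_i}^2\tilde{u}^\nu$, proved once by parabolic uniqueness, replaces all of that and makes the subsequent telescoping transparent. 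One small slip to fix when you write it up: in your ``what survives'' display the factor $\frac{\partial^2}{\partial z_k^2}$ should not yet appear --- it only enters after you multiply by $\lambda_k\frac{\partial^2}{\partial z_k^2}$ and sum over $k$ in the next sentence.
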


	\begin{proof}
	See Appendix \ref{app:BoundPDEDiff3}.
	\end{proof}
    
    \begin{thm}\label{thm:BoundPDEDiff3}
    Assume $g\in C^{2,3,mix}$ in (\ref{eq:PDEHeat}--\ref{eq:PDEHeatBoundary}). Then the expansion error $\hat{u}_{r,2}$ satisfies
        \begin{eqnarray}
         \label{eq:BoundPDEDiff3}
			\left\| \hat{u}_{r,2}(\cdot,t) \right\|_{\infty} &\leq& t^3 \sum_{r<i<j<k\le N} \lambda_i \lambda_j \lambda_k \left\| \frac{\partial^6 g}{\partial z_i^2\partial z_j^2 \partial z_k^2} \right\|_\infty.
		\end{eqnarray}
	\end{thm}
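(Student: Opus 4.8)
The plan is to follow the proof of Theorem~\ref{thm:BoundPDEDiff2}, but now peeling off \emph{three} nested applications of Lemma~\ref{rhs-lemma} instead of two, to account for the extra order of accuracy. By Lemma~\ref{lem:BoundPDEDiff3}, the error $\hat u_{r,2}$ solves (\ref{eq:PDEuhatxi3}--\ref{eq:PDEuhatxi3Boundary}) with the degenerate elliptic operator $\sum_{k=1}^r\lambda_k\partial_{z_k}^2$ (no zero-order term) and with the source term $f$ of (\ref{eq:PDEuhatxi4Source}); hence it suffices to show $|f(\cdot,t)|\le C t^2$ for a suitable constant $C$ and then invoke Lemma~\ref{rhs-lemma} with $p=2$.

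First I would analyse the inner difference $w:=\tilde u^{\{1,\ldots,r,l,k\}}-\tilde u^{\{1,\ldots,r,k\}}$ occurring in $f$, where $i,l,k\in\{r+1,\ldots,N\}$ are pairwise distinct. Subtracting the two instances of (\ref{eq:auxPDE}) and using $\mathcal L^{\{1,\ldots,r,l,k\}}=\mathcal L^{\{1,\ldots,r,k\}}+\lambda_l\partial_{z_l}^2$ shows that $w$ solves $\partial_t w=\mathcal L^{\{1,\ldots,r,k\}}w+\lambda_l\partial_{z_l}^2\tilde u^{\{1,\ldots,r,l,k\}}$ with $w(\cdot,0)=0$. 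Since all coefficients are constant, differentiating twice each in $z_i$ and $z_k$ gives the same type of equation for $\partial_{z_i}^2\partial_{z_k}^2 w$, with source $\lambda_l\,\partial_{z_l}^2\partial_{z_i}^2\partial_{z_k}^2\tilde u^{\{1,\ldots,r,l,k\}}$. Likewise, differentiating (\ref{eq:auxPDE}) for $\tilde u^{\{1,\ldots,r,l,k\}}$ twice each in $z_l,z_i,z_k$ shows that $\partial_{z_l}^2\partial_{z_i}^2\partial_{z_k}^2\tilde u^{\{1,\ldots,r,l,k\}}$ solves a heat equation with zero initial data and source $\partial_{z_l}^2\partial_{z_i}^2\partial_{z_k}^2 u$; and this last function itself solves (\ref{eq:PDEHeat}) with initial datum $\partial^6 g/(\partial z_i^2\partial z_l^2\partial z_k^2)$, which lies in $C^b$ precisely because $g\in C^{2,3,mix}$ and $i,l,k$ are distinct. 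By the maximum principle (or the Green's function representation (\ref{eq:PDEHeatSolutionNu})) it is bounded uniformly in $t$ by $M_{ilk}:=\|\partial^6 g/(\partial z_i^2\partial z_l^2\partial z_k^2)\|_\infty$. Now chaining Lemma~\ref{rhs-lemma}: with $p=0$ it yields $|\partial_{z_l}^2\partial_{z_i}^2\partial_{z_k}^2\tilde u^{\{1,\ldots,r,l,k\}}(\cdot,t)|\le M_{ilk}\,t$; feeding this (times $\lambda_l$) into the equation for $\partial_{z_i}^2\partial_{z_k}^2 w$ and applying Lemma~\ref{rhs-lemma} with $p=1$ gives $|\partial_{z_i}^2\partial_{z_k}^2 w(\cdot,t)|\le \tfrac12\lambda_l M_{ilk}\,t^2$. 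Summing the three indices in (\ref{eq:PDEuhatxi4Source}) then gives $|f(\cdot,t)|\le \tfrac12 t^2 \sum \lambda_i\lambda_l\lambda_k M_{ilk}$, the sum running over pairwise distinct $i,l,k\in\{r+1,\ldots,N\}$.

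A final application of Lemma~\ref{rhs-lemma} with $p=2$ then gives $\|\hat u_{r,2}(\cdot,t)\|_\infty\le \tfrac16 t^3\sum \lambda_i\lambda_l\lambda_k M_{ilk}$ over distinct ordered triples. Since $\lambda_i\lambda_l\lambda_k M_{ilk}$ depends only on the unordered set $\{i,l,k\}$, and each set $\{r<i<j<k\le N\}$ is counted $3!=6$ times among the distinct ordered triples, the factor $\tfrac16$ cancels and one recovers exactly (\ref{eq:BoundPDEDiff3}). I do not expect a genuine obstacle here beyond careful bookkeeping: the points needing attention are that all spatial derivatives commute through the constant-coefficient operators, that $C^{2,3,mix}$ is exactly the regularity making every intermediate heat equation have $C^b$ initial data, and the combinatorial factor $6$ that makes the leading constant in (\ref{eq:BoundPDEDiff3}) come out to $1$.
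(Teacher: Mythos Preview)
Your proposal is correct and follows essentially the same route as the paper's proof: three nested applications of Lemma~\ref{rhs-lemma} starting from Lemma~\ref{lem:BoundPDEDiff3}, with the final combinatorial factor $1/6$ cancelling the $3!$ orderings of each triple. The only cosmetic difference is that when subtracting the two instances of (\ref{eq:auxPDE}) you keep $\mathcal L^{\{1,\ldots,r,k\}}$ on the left and push $\lambda_l\partial_{z_l}^2\tilde u^{\{1,\ldots,r,l,k\}}$ to the source, whereas the paper keeps $\mathcal L^{\{1,\ldots,r,k,l\}}$ on the left and obtains $\lambda_l\partial_{z_l}^2$ of the \emph{other} term as source; both rearrangements lead to the same bound.
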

    \begin{proof}
Set, for notational brevity,
\[
v: = \lambda_i \lambda_k \frac{\partial^2}{\partial z_i^2} \frac{\partial^2}{\partial z_k^2} \tilde{u}^{\{1,\ldots,r,k\}},  \qquad
w: = \lambda_i \lambda_k \frac{\partial^2}{\partial z_i^2} \frac{\partial^2}{\partial z_k^2} \tilde{u}^{\{1,\ldots,r,l,k\}},
\]
such that $\tilde{v}: = w-v$ is a single term in the sum on the right-hand side of (\ref{eq:PDEuhatxi4Source}).
Then differentiating (\ref{eq:auxPDE}) with respect to $z_i$ and $z_k$ twice, for $\nu=\{1,\ldots, r, k\}$ and $\nu=\{1,\ldots, r, k, l\}$, respectively, 
$v$ and $w$ are seen to satisfy
\begin{eqnarray}
\label{v-pde}
\frac{\partial v}{\partial t} - 
\mathcal{L}^{\{1,\ldots,r,k\}} v
&=& \lambda_i \lambda_k \frac{\partial^2}{\partial z_i^2} \frac{\partial^2}{\partial z_k^2} u, \\
\frac{\partial w}{\partial t} - 
\mathcal{L}^{\{1,\ldots,r,k,l\}} w
&=& \lambda_i \lambda_k \frac{\partial^2}{\partial z_i^2} \frac{\partial^2}{\partial z_k^2} u.
\label{w-pde}
\end{eqnarray}
Taking the difference between (\ref{v-pde}) and (\ref{w-pde}) gives an equation for 
$\tilde{v}$,
\begin{eqnarray}
\label{eqn:aux1}
\frac{\partial \tilde{v}}{\partial t} - 
\mathcal{L}^{\{1,\ldots,r,k,l\}}  \tilde{v}
&=& \lambda_l \frac{\partial^2 v}{\partial z_l^2}.
\end{eqnarray}
By differentiation of (\ref{v-pde}) twice with respect to $z_l$, it is also seen that
\begin{eqnarray}
\label{eqn:aux2}
\frac{\partial}{\partial t}\left(\lambda_l \frac{\partial^2 v}{\partial z_l^2}\right) - 
\mathcal{L}^{\{1,\ldots,r,k\}}
\left(\lambda_l \frac{\partial^2 v}{\partial z_l^2}\right)
&=& \lambda_l \lambda_i \lambda_k \frac{\partial^2}{\partial z_l^2} \frac{\partial^2}{\partial z_i^2} \frac{\partial^2}{\partial z_k^2} u.
\end{eqnarray}
Hence we deduce in turn, using Lemma \ref{rhs-lemma}, 
\begin{eqnarray*}
\left\|\lambda_l \frac{\partial^2 v}{\partial z_l^2}\right\|_\infty &\le& \; t \; \, \lambda_l \lambda_i \lambda_k
\left\|\frac{\partial^2}{\partial z_l^2} \frac{\partial^2}{\partial z_i^2} \frac{\partial^2}{\partial z_k^2} u \right\|_\infty
\hspace{5 cm} \text{(from (\ref{eqn:aux2}))},
\\
\left\| \tilde{v} \right\|_\infty &\le& \frac{t^2}{2} \, \lambda_l \lambda_i \lambda_k
\left\|\frac{\partial^2}{\partial z_l^2} \frac{\partial^2}{\partial z_i^2} \frac{\partial^2}{\partial z_k^2} u \right\|_\infty
\hspace{5 cm} \text{(from (\ref{eqn:aux1}))},
\\
\left\| \hat{u}^{\xi} \right\|_\infty &\le& \frac{t^3}{6} \, 
\sum_{k=r+1}^N \sum_{\scriptsize \begin{array}{c} l\!=\!r\!+\!1 \\l \!\neq\! k\end{array}}^N \sum_{\scriptsize \begin{array}{c} i\!=\!r\!+\!1 \\i\!\neq\! k,\!l\end{array}}^N
\lambda_l \lambda_i \lambda_k
\left\|\frac{\partial^2}{\partial z_l^2} \frac{\partial^2}{\partial z_i^2} \frac{\partial^2}{\partial z_k^2} u \right\|_\infty
\qquad \qquad \text{(from Lemma \ref{lem:BoundPDEDiff3})}.
\end{eqnarray*}
    \end{proof}

    \begin{rem}
   By extension of Remark \ref{rem:superposition1}, 
   there are no univariate or bivariate quadratic terms with factors $\lambda_k^2$ or $\lambda_j^2 \lambda_k^2$ in (\ref{eq:BoundPDEDiff3}). 
    Hence, any solutions which only depend on one or two of the $z_k$ are integrated exactly, and by superposition any linear combination of such terms. By similar reasoning to before, $\widehat{u}_{r,2}$ is zero for initial conditions of the form
    \begin{eqnarray*}
    g(x) = \sum_{j,k=r+1}^N
    g_{j,k}(x_j,x_k;x_1,\ldots,x_r) \;\; + \!\!
     \sum_{i,j,k=r+1}^N 
     x_i \,  g_{i,j,k}(x_j,x_k;x_1,\ldots,x_r),
    \end{eqnarray*}
for any functions $g_{j,k}$ and $g_{i,j,k}$.
    \end{rem}
    
    \begin{rem}
    One can again weaken the smoothness requirements on $g$, as discussed in Corollary \ref{cor:BoundPDEDiff2} for the first order case.
    \end{rem}

\begin{rem}
Comparing how (\ref{eq:BoundPDEDiff3}) emerges from (\ref{eq:PDEuhatxi4Source}) to how (\ref{eq:BoundPDEDiff2_1}) emerges from (\ref{eq:PDEuhatxi3}),
we conjecture  (but do not prove) for 
a higher order expansions of the form
(\ref{eqn:truncexp})
that
\begin{eqnarray*}
\|u - u_{r,m}\|_\infty &\le & t^{m+1} \sum_{\scriptsize \begin{array}{c} |\alpha|\!=\!m\!+\!1 \\ \alpha_i \!\in\! \{0,1\}\end{array}}
(\lambda-\lambda^0)^\alpha \|D^{2\alpha} g\|_{\infty}.
\end{eqnarray*}
\end{rem}

\section{Analysis and construction of approximations by Taylor expansion in $\lambda$}
\label{subsec:taylor}

This section provides an alternative derivation of the expansions.  This establishes a link between the ANOVA view and the literature motivated by Taylor expansions, \cite{RW07, HKSW10}. It will show that higher order approximation of the Taylor terms is not always advantageous (Section \ref{subsec:alternative}) and allows us to show sharpness of our previous results (Section \ref{subsec:sharpness}). 

		Let $u(z,t,\cdot)\in C^{m}$, i.e., $m$ times differentiable in $\lambda$, and fix $\lambda^0 \in \mathbb{R}^N$. Using multi-index notation, we write $D_\lambda^\alpha u = \frac{\partial^{|\alpha|} u}{\partial\lambda^{\alpha}} = \frac{\partial^{|\alpha|} u}{\partial\lambda_1^{\alpha_1}\ldots\partial\lambda_N^{\alpha_N}}$ and $(\lambda-\lambda^0)^\alpha = (\lambda_1 - \lambda^0_1)^{\alpha_1}\cdot \ldots\cdot (\lambda_N - \lambda_{0,N})^{\alpha_N}$. Then,
		by standard multivariate calculus (see, e.g., \cite{K04}),
	\begin{eqnarray}\label{eq:TaylorExpansion}
		u(z,t,\lambda) &=& \sum_{k=0}^m{\sum_{|\alpha|=k}{ \frac{(\lambda-\lambda^0)^\alpha}{\alpha !} D_\lambda^\alpha u (z,t,\lambda^0)}} + R_m(z,t,\lambda,\lambda^0,u),
	\end{eqnarray}
with a remainder term $R_m$. If $u(z,t,\cdot)\in C^{m+1}$, then an explicit form can be given as
	\begin{eqnarray}
\label{taylor-m}
		R_m(z,t,\lambda,\lambda^0,u) &=& \sum_{|\alpha|=m+1}{ (\lambda-\lambda^0)^\alpha R^\alpha(z,t,\lambda,\lambda^0,u)}, \\
		R^\alpha(z,t,\lambda,\lambda^0,u) &=& \frac{|\alpha|}{\alpha!}\int_0^1{ (1-s)^{|\alpha|-1} (D_\lambda^\alpha u)(z,t,\lambda^0+s(\lambda-\lambda^0)) ds} .
\label{taylor-alpha}
	\end{eqnarray}
	
	We will analyse the existence of these $\lambda$-derivatives in detail in Section \ref{subsec:existderiv}.

	\subsection{The first order case (see also Example \ref{ex-first-order} and Theorem \ref{thm:BoundPDEDiff2})}
		For $\alpha_k := {(0,\ldots,0,1,0,\ldots,0)}$, where the $k$-th entry is $1$, a first order finite difference approximation is given for $\delta \lambda_k>0$ by
	\begin{eqnarray}
		\frac{1}{\delta\lambda_k} \Delta^{\alpha_k} u(z,t,\lambda^0) = \frac{u (z,t,\lambda^0 + \delta\lambda_k e_k) - u (z,t,\lambda^0)}{\delta\lambda_k}, 
	\end{eqnarray}
	where $e_k$ is the $k$-th unit vector. Now choose $1\leq r\leq N$ and set $\lambda^0 = (\lambda_1,\ldots,\lambda_r,0,\ldots,0)$, $\delta\lambda = \lambda-\lambda^0$, and $m=1$. Furthermore, assume $u(z,t,\lambda)$ is twice continuously differentiable in $\lambda$. Then
	\begin{eqnarray}
	\nonumber
		u(z,t,\lambda) &=& D_\lambda^0 u (z,t,\lambda^0) + \sum_{|\alpha|=1}\frac{(\lambda-\lambda^0)^\alpha}{\alpha !} D_\lambda^\alpha u (z,t,\lambda^0) + R_1(z,t,\lambda,\lambda^0,u) \\ \nonumber
&=& u(z,t,\lambda^0)
	+ \sum_{k=r+1}^N\lambda_k \frac{u(z,t,\lambda^0+\lambda_k e_k) - u(z,t,\lambda^0) - 
	\lambda_k^2 R^{2\alpha_k}(z,t,\lambda^0+\lambda_k e_k,\lambda^0,u)}{\lambda_k} \\ 
&& + \sum_{k=r+1}^N{ \lambda_k^2 R^{2\alpha_k}(z,t,\lambda,\lambda^0,u)} 
+ \sum_{\scriptsize \begin{array}{c}k,\!l\!\!=\!\!r\!+\!1 \\ k\!\!\neq\!\! l\end{array}}^N{ \lambda_k\lambda_l R^{\alpha_k+\alpha_l}(z,t,\lambda,\lambda^0,u)}.
 \nonumber
\end{eqnarray}
Inserting the finite difference approximations for the first derivatives,
\begin{eqnarray}
\nonumber
u(z,t,\lambda)
	&=&
u_{r,1} +  \sum_{\scriptsize \begin{array}{c}k,\!l\!\!=\!\!r\!+\!1 \\ k\!\!\neq\!\! l\end{array}}^N{ \!\!\!  \!\! \lambda_k\lambda_l R^{\alpha_k+\alpha_l}(z,t,\lambda,\lambda^0,u)} \\
&& \qquad \!\! +  \sum_{k=r+1}^N   \lambda_k^2 \left(
R^{2\alpha_k}(z,t,\lambda,\lambda^0,u) - R^{2\alpha_k}(z,t,\lambda^0 \!+\!\lambda_k e_k,\lambda^0,u)\right).
\label{eq:TaylorExpansion1,r}
\end{eqnarray}
The order (in $\lambda$) of the second term in (\ref{eq:TaylorExpansion1,r}) agrees precisely with the results in Section \ref{sec:FirstOrderFirstREigenvalues}, and particularly with Theorem \ref{thm:BoundPDEDiff2}. We will analyse in Section \ref{subsec:existderiv} the relation between the regularity in $\lambda$
and the terms in (\ref{eq:BoundPDEDiff2_1}).
The last term is a higher order term for smooth (in $\lambda$) $R^{2\alpha_k}$, but it is this term which prevents us from directly deducing the compact form
of Theorem \ref{thm:BoundPDEDiff2} with only first order mixed terms.

\subsection{Alternative difference stencils in $\lambda$}
\label{subsec:alternative}

 The methods above use the standard one-sided finite difference approximation
 \[
\frac{ \Delta_k u(z,t,\lambda^0)}{\delta \lambda_k} = \frac{\partial}{\partial \lambda_k} u (z,t,\lambda^0) + O(\delta \lambda_k),	
\]
 and, for $\delta \lambda\ge 0$ and $\alpha \in \{0,1\}^N$ such that $\delta \lambda_k > 0$ if and only if $\alpha_k=1$, then
 \[
	\frac{\Delta^\alpha u(z,t,\lambda^0)}{\delta \lambda^\alpha}  = D_\lambda^\alpha u (z,t,\lambda^0) + \sum_{k=1}^N O(\alpha_k \delta \lambda_k).
\]
	
	Considering (\ref{eq:TaylorExpansion}), the question arises if there is any benefit in evaluating the partial derivatives $D^\alpha_\lambda$ 
 by higher order finite difference approximations $\mathcal{D}^\alpha$, such that
	\begin{equation}\label{eq:fd}
	 \mathcal{D}^\alpha u(z,t,\lambda^0) = D_\lambda^\alpha u (z,t,\lambda^0) + \sum_{\beta\in \mathcal{S}_\alpha}O(\delta\lambda^{\beta}),
	\end{equation}
	where $\mathcal{S}_\alpha$ is a set of multi-indices containing the orders of all error terms.
	So for $\Delta^\alpha$ from before, 
	$\mathcal{S}_\alpha = \{e_k: \alpha_k>0 \}$, $e_k$ the $k$th canonical vector.
	For example, \cite{HKSW10} proposes to use high order compact finite difference stencils introduced in \cite{L92}.

	Combining equations (\ref{eq:fd}) and (\ref{eq:TaylorExpansion}) gives
	\begin{eqnarray*}
		u(z,t,\lambda) &=& \sum_{k=0}^m\sum_{|\alpha|=k}\frac{(\lambda-\lambda^0)^\alpha}{\alpha !}\mathcal{D}^{\alpha}u(z,t,\lambda^0) \nonumber\\
		&& \;+ \; \sum_{k=1}^m\sum_{|\alpha|=k}\sum_{\beta\in \mathcal{S}_\alpha} O\left((\lambda-\lambda^0)^\alpha \delta\lambda^{\beta}\right)+ R_m(z,t,\lambda,\lambda^0,u).
	\end{eqnarray*}
	If we choose $\delta\lambda = \lambda-\lambda^0$ and assume $u \in C^{m+1}$ in $\lambda$, the leading order errors are of size
	\begin{eqnarray*}
	\sum_{|\alpha|=m+1} O\left((\lambda-\lambda^0)^{\alpha}\right) +
	    \sum_{k=1}^m\sum_{|\alpha|=k}\sum_{\beta\in \mathcal{S}_\alpha} O\left((\lambda-\lambda^0)^{\alpha+\beta}\right).
	\end{eqnarray*}
	The order of the first sum is determined by the expansion order (assuming sufficient smoothness) and will be the limiting factor to the overall order in practice.
	The order of the terms in the second sum can be increased by choosing higher order stencils,
	provided enough smoothness in $\lambda$, i.e., derivatives up to order $\alpha+\beta$ exist.
	This comes at the expense of additional computational complexity. 
	More specifically, computing $\mathcal{D}^\alpha u (z,t,\lambda^0)$ requires the values $u (z,t,\lambda')$ for different points $\lambda'$, depending on the finite difference scheme. 
	As the dimensionality of the PDEs is the same, the complexity increases by a constant multiplicative factor and therefore not crucially.
	All these schemes can be translated into an expansion using the $\nu/\xi$-notation introduced in Section \ref{sec:Expansion}. 
	
\begin{rem}
A disadvantage of these higher-order stencils is that additional (univariate) terms appear in the error (\ref{eq:BoundPDEDiff2_1}), and the exact computation of certain functions with low superposition dimension discussed in Remark \ref{rem:superposition1} is lost. A similar comment extends to the second-order case.
For the same reason, it does not seem advisable to try and increase the accuracy of the finite difference approximations by using a smaller step size than $\lambda_i-\lambda_{0,i}$, or to compute the derivatives more or less exactly by, say, algorithmic differentiation.
\end{rem}

\subsection{Sharpness of the bounds}
\label{subsec:sharpness}

Here, we will show that the error bounds (\ref{eq:BoundPDEDiff2_1}) and (\ref{eq:BoundPDEDiff3}) are `asymptotically sharp'
in the following sense:
We will give initial data $g$ such that
\begin{eqnarray}
\label{sharpness}
\|u - u_{r,m} \|_\infty &=& t^{m+1} 
\sum_{\scriptsize \begin{array}{c} |\alpha|\!=\!m\!+\!1 \\ \alpha_i \!\in\! \{0,1\}\end{array}}
(\lambda-\lambda^0)^\alpha \|D^{2 \alpha} g\|_{\infty} + o(t^{m+1} |\lambda-\lambda^0|^{m+1}).
\end{eqnarray}
Take without loss of generality $r=0$, $\lambda^0=0$ and
\[
g(z) = \prod_{k=1}^N \cos(z_k),
\]
such that $\|g\|_\infty = g(0) =1$ and $\|D^{2\alpha} g\|_\infty = |D^{2\alpha} g(0)| =1$ for all $\alpha$. 
The solution to (\ref{eq:PDEHeat}, \ref{eq:PDEHeatBoundary}) is then
\[
u(z,t) = \exp\left(- t \sum_{k=1}^N \lambda_k \right) \prod_{k=1}^N \cos(z_k).
\]
By (\ref{eq:TaylorExpansion}) one gets that
\begin{eqnarray*}
\exp\left(- t \sum_{k=1}^N \lambda_k \right) &=& 
\sum_{k=0}^\infty \frac{1}{k!}
 \left(-t \sum_{k=1}^N \lambda_k \right)^k 
\;=\;\; 1 + \sum_{k=1}^N \left(\e^{-t \lambda_k}-1 \right) 
+ \bar{R}_1 \\
&=& 1 + \sum_{k=1}^N \left(\e^{-t \lambda_k}-1 \right) + \sum_{1\le j< k}^N \left(\e^{-t \lambda_k-t\lambda_j}-\e^{-t \lambda_k}-\e^{-t \lambda_j}+1 \right)
+ \bar{R}_2
\end{eqnarray*}
etc, where 
\[
\bar{R}_m =
\sum_{\scriptsize \begin{array}{c} |\alpha|\!=\!m\!+\!1 \\ \alpha_i \!\in\! \{0,1\}\end{array}}
(\lambda-\lambda^0)^\alpha R^\alpha(z,t,\lambda,\lambda^0,u),
\]
i.e., $\bar{R}_m$ contains only the mixed terms of $R_m$ in (\ref{taylor-m}).
One calculates explicitly from (\ref{taylor-alpha}),
\begin{eqnarray*}
R^\alpha &=& 
 \frac{|\alpha|}{\alpha!}\int_0^1{ (1-s)^{|\alpha|-1} t^{|\alpha|} u(z,t,\lambda^0+s(\lambda-\lambda^0)) ds}
\;\;=\;\;  t^{|\alpha|} u(z,t,\lambda^0) + o(t^{|\alpha|}).
\end{eqnarray*}
From this the claim (\ref{sharpness}) follows.
In fact, we see that locally, in a neighbourhood of $(z,t) = (0,0)$, (\ref{sharpness}) will generally describe
the behaviour of the error for smooth $g$.
	

	\section{Regularity in $\lambda$ and $z$} 
	\label{subsec:existderiv}

	In this section, we establish conditions that guarantee the existence of partial derivatives 
	in $\lambda$, as needed for the Taylor expansions in Section \ref{subsec:taylor}.
	We also derive bounds on the size of those derivatives, so that an explicit upper bound for the remainder term in the Taylor expansion can be given, thus bridging
	the analyses from Sections \ref{sec:FirstOrderFirstREigenvalues} and \ref{subsec:second_order}
	on the one hand,
	and \ref{subsec:taylor} on the other. 
	
	In particular, we examine when $D_\lambda^\alpha u$ exists for non-smooth initial conditions. 
		It is a well-known phenomenon that the heat equation 
		has a smoothing effect on non-smooth initial data. This has been observed in \cite{GKS10} from an ANOVA decomposition perspective, where the authors are, as here, motivated by option pricing problems. Here, we analyze different types of payoffs in more detail.
Thus, we will be able to answer the regularity question positively for a wide class of practically relevant problems. This includes initial conditions which are not differentiable everywhere --- for example ones with kinks or jumps --- as long as the non-differentiabilities are localised. Examples \ref{ex:DigitalGeometricBasket} and \ref{ex:GeometricBasket} illustrate this effect.

%
%
\subsection{The smooth case}
\label{subsec:smooth}

	Let us first consider the existence and form of partial derivatives $D_\lambda^\alpha u$ for smooth $g$. 
\begin{pro}
\label{pro:smooth}
Let $u$ be a solution to (\ref{eq:PDEHeat})
with $g \in C^{2,1,mix}$ and $\Phi = \Phi^{\{1,\ldots,N\}}$, then for all $z\in \mathbb{R}^N$, $i=1,\ldots,N$,
\begin{eqnarray}
\label{eqn:firstlamder}
	\frac{\partial}{\partial\lambda_i} u(z,t,\lambda)
	&=& t \int_{\mathbb{R}^N} \Phi(y,t,\lambda) \, \frac{\partial^2 g}{\partial z_i^2}(z-y) \dy.
\end{eqnarray}
If, moreover, 
$g \in C^{2,2,mix}$, then for $i\neq j$
\begin{eqnarray}
\label{eqn:seclamder}
	\frac{\partial^2}{\partial\lambda_i \partial\lambda_j} u(z,t,\lambda)
	&=& t^2 \int_{\mathbb{R}^N} \Phi(y,t,\lambda) \,\frac{\partial^4 g}{\partial z_i^2 \partial z_j^2}(z-y) \dy.
\end{eqnarray}
\end{pro}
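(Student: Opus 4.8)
The plan is to differentiate the Green's function representation $u(z,t,\lambda) = \int_{\mathbb{R}^N} \Phi(y,t,\lambda)\, g(z-y)\,dy$ with respect to $\lambda_i$ under the integral sign, then move the resulting derivatives off the kernel and onto $g$ by integration by parts. The heat kernel in each direction, $\phi_k(y_k,t,\lambda_k) = \exp(-y_k^2/(4\lambda_k t))/\sqrt{4\pi\lambda_k t}$, satisfies the one-dimensional heat equation $\partial\phi_k/\partial t = \lambda_k\, \partial^2\phi_k/\partial y_k^2$, and moreover a direct calculation shows $\partial\phi_k/\partial\lambda_k = t\, \partial^2\phi_k/\partial y_k^2$ (both equal $\frac{1}{4\lambda_k}\big(\frac{y_k^2}{2\lambda_k t}-1\big)\phi_k$, up to the routine algebra). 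Hence $\partial\Phi/\partial\lambda_i = t\, \partial^2\Phi/\partial y_i^2$. First I would justify differentiation under the integral via dominated convergence — the difference quotients of $\Phi$ in $\lambda_i$ are dominated by integrable functions uniformly for $\lambda_i$ in a compact subset of $(0,\infty)$, using that $g$ is bounded (it lies in $C^b$). Then
\begin{eqnarray*}
\frac{\partial}{\partial\lambda_i} u(z,t,\lambda) = t \int_{\mathbb{R}^N} \frac{\partial^2\Phi}{\partial y_i^2}(y,t,\lambda)\, g(z-y)\,dy,
\end{eqnarray*}
and two integrations by parts in the $y_i$ variable transfer the derivatives to $g$. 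The boundary terms vanish because $\Phi$ and its $y_i$-derivatives decay like a Gaussian at infinity while $g$ is bounded; the hypothesis $g\in C^{2,1,mix}$ guarantees $\partial^2 g/\partial z_i^2$ is continuous and bounded, so the resulting integral is well-defined. Noting $\partial^2/\partial y_i^2 [g(z-y)] = (\partial^2 g/\partial z_i^2)(z-y)$ yields \eqref{eqn:firstlamder}.

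For the second statement, I would iterate: starting from \eqref{eqn:firstlamder}, differentiate again with respect to $\lambda_j$ with $j\neq i$. Since the kernel factorizes, $\partial\Phi/\partial\lambda_j = t\,\partial^2\Phi/\partial y_j^2$ still holds and commutes with the $y_i$-derivatives already present, so
\begin{eqnarray*}
\frac{\partial^2}{\partial\lambda_i\partial\lambda_j} u(z,t,\lambda) = t^2 \int_{\mathbb{R}^N} \frac{\partial^4\Phi}{\partial y_i^2\,\partial y_j^2}(y,t,\lambda)\, g(z-y)\,dy,
\end{eqnarray*}
again justified by dominated convergence using boundedness of $\partial^2 g/\partial z_i^2$ from the first step. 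Four integrations by parts (two in $y_i$, two in $y_j$) move everything onto $g$, with boundary terms vanishing as before; here one uses $g\in C^{2,2,mix}$ so that $\partial^4 g/\partial z_i^2\partial z_j^2$ is continuous and bounded. That gives \eqref{eqn:seclamder}. The restriction $i\neq j$ matters only in that for $i=j$ one would need fourth-order smoothness in a single variable and the kernel identity would involve $\partial\phi_i/\partial\lambda_i$ applied twice, which is a genuinely different (though analogous) computation; the mixed case is cleaner because the one-dimensional factors are handled independently.

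The main obstacle, and the only place real care is needed, is the justification of differentiation under the integral sign together with the vanishing of the boundary terms in the integration by parts — in other words, controlling the interchange of limits. This is where the hypothesis $g \in C^b$ (boundedness) is essential: it lets one dominate the $\lambda$-difference quotients of the Gaussian kernel and its spatial derivatives, all of which are integrable in $y$ with bounds locally uniform in $\lambda_i \in (0,\infty)$, and it ensures the product of $g$ with the decaying kernel derivatives vanishes at spatial infinity. The mixed-smoothness hypotheses $C^{2,1,mix}$ and $C^{2,2,mix}$ then guarantee that the final integrands — $\Phi$ times a continuous bounded function — are themselves well-defined and that the representation formulas make sense; they also give, as an immediate corollary, the bounds $\|\partial u/\partial\lambda_i\|_\infty \le t\,\|\partial^2 g/\partial z_i^2\|_\infty$ and $\|\partial^2 u/\partial\lambda_i\partial\lambda_j\|_\infty \le t^2\,\|\partial^4 g/\partial z_i^2\partial z_j^2\|_\infty$, which are exactly the building blocks needed to connect with the remainder estimates in Section \ref{subsec:taylor}.
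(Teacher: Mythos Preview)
Your proposal is correct and takes essentially the same approach as the paper, which simply states that the result ``follows directly by differentiating the Green's function representation.'' You have filled in exactly the details the paper omits --- the kernel identity $\partial\Phi/\partial\lambda_i = t\,\partial^2\Phi/\partial y_i^2$, the justification via dominated convergence, and the integration by parts --- so there is no substantive difference in method.
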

\begin{proof}
This follows directly by differentiating the Green's function representation (\ref{eq:PDEHeatSolutionNu}).
\end{proof}

An important point to note from (\ref{eqn:seclamder}) is that as $\lambda_i, \lambda_j \rightarrow 0$,
the integral tends to the second mixed derivative of $g$ at $z$. These are precisely the error terms found in (\ref{eq:BoundPDEDiff2_1}).

\subsection{The piecewise smooth case and data smoothing}
\label{subsec:nonsmooth}
 
The motivation for the research in this paper are applications in derivative pricing, where the initial data are typically non-smooth.
The explicit examples in Appendix \ref{subsec:exnonsmooth} show that this does not necessarily mean that the results from Section \ref{subsec:smooth} are not applicable for piecewise smooth data, in fact, with the exception of degenerate cases, which we will characterise in this section, the same convergence orders as in the smooth case hold.

        \begin{cor}
    [to Proposition \ref{pro:smooth}]
Let $u$ be a solution to (\ref{eq:PDEHeat}) and $G$ from (\ref{eq:DefG}).
Assume the conditions imposed  on $g$ in Proposition \ref{pro:smooth} on $\mathbb{R}^{N}$ are replaced by the same
conditions on $G(z_1,\ldots,z_r,\cdot,\ldots,\cdot,t)$ on $\mathbb{R}^{N-r}$, for fixed $z_1,\ldots,z_r$.
Then the result still holds
for all $i > j > r$ and all $\lambda\ge 0$ such that $\lambda_1, \ldots, \lambda_r >0$.
\end{cor}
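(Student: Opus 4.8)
The plan is to reduce the statement to Proposition~\ref{pro:smooth} by the operator-splitting device already used in the proof of Corollary~\ref{cor:BoundPDEDiff2}. Because $\mathcal{L}^{\{1,\ldots,r\}}$ and $\mathcal{L}^{\{r+1,\ldots,N\}}$ act on disjoint groups of coordinates and hence commute, solving (\ref{eq:PDEHeat}--\ref{eq:PDEHeatBoundary}) up to time $t$ can be carried out by first diffusing in the directions $1,\ldots,r$ --- which turns $g$ into $G(\cdot,t)$ from (\ref{eq:DefG}) --- and then diffusing in the directions $r+1,\ldots,N$. Concretely, I would fix $t$ and let $v(z,t',\lambda)$ solve
\[
\frac{\partial v}{\partial t'} = \sum_{k=r+1}^N \lambda_k \frac{\partial^2 v}{\partial z_k^2}, \qquad v(\cdot,0,\lambda) = G(\cdot,t),
\]
so that $v(\cdot,t,\lambda) = u(\cdot,t,\lambda)$, exactly as in the proof of Corollary~\ref{cor:BoundPDEDiff2}. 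The hypothesis $\lambda_1,\ldots,\lambda_r>0$ enters here: it makes $G(\cdot,t)$ a genuine Gaussian average of $g$ in the first $r$ coordinates, and, together with the assumption of the corollary, a function in the required mixed-smoothness class in the variables $z_{r+1},\ldots,z_N$.

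The key point is that, since $i>r$ (respectively $i,j>r$), the initial datum $G(\cdot,t)$ of the $v$-equation does not depend on $\lambda_i$ (respectively on $\lambda_i$ and $\lambda_j$): by (\ref{eq:DefG}), $G$ depends on $\lambda$ only through $\lambda_1,\ldots,\lambda_r$. Therefore $\partial_{\lambda_i} v$ is simply the $\lambda_i$-derivative of a heat evolution with a \emph{fixed} initial condition, and Proposition~\ref{pro:smooth} applies to $v$ after the substitutions $N\to N-r$, $g\to G(z_1,\ldots,z_r,\cdot,t)$, $(\lambda_1,\ldots,\lambda_N)\to(\lambda_{r+1},\ldots,\lambda_N)$, $\Phi\to\Phi^{\{r+1,\ldots,N\}}$; the regularity these substitutions demand is exactly what the corollary assumes. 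This produces
\[
\frac{\partial}{\partial\lambda_i} v(z,t',\lambda) = t' \int_{\mathbb{R}^N} \Phi^{\{r+1,\ldots,N\}}(y,t',\lambda)\, \frac{\partial^2 G}{\partial z_i^2}(z-y,t)\,dy,
\]
and the analogue with $(t')^2$ and $\partial^4 G/(\partial z_i^2\partial z_j^2)$ for the mixed second derivative.

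Finally I would set $t'=t$, invoke $v(\cdot,t,\lambda)=u(\cdot,t,\lambda)$, and read off formulas of the shape (\ref{eqn:firstlamder}) and (\ref{eqn:seclamder}) with $g$ replaced by $G(\cdot,t)$. The one step that genuinely needs an argument --- and which I expect to be essentially the only obstacle --- is the interchange $\partial_{\lambda_i}\bigl[v(z,t,\lambda)\bigr] = \bigl[\partial_{\lambda_i} v(z,t',\lambda)\bigr]_{t'=t}$: the running time $t'$ and the parameter $t$ frozen inside $G(\cdot,t)$ must be treated as independent, differentiated separately, and only then identified, which is legitimate precisely because that frozen parameter carries no $\lambda_i$-dependence (again using $i>r$; for $i\le r$ this would fail, and moreover $G$ need not be smooth there). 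Verifying this, together with checking that the differentiation-under-the-integral-sign hypotheses behind Proposition~\ref{pro:smooth} are met by $G(\cdot,t)$ itself rather than by $g$, is all that the proof requires beyond substitution into an already-established identity.
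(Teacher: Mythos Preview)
Your approach is correct and is precisely the argument the paper has in mind: the corollary is stated without proof, labelled simply ``[to Proposition~\ref{pro:smooth}]'', and the intended justification is exactly the operator-splitting reduction you describe, mirroring the proof of Corollary~\ref{cor:BoundPDEDiff2}. Your identification of the one genuine verification step --- that $G(\cdot,t)$ carries no $\lambda_i$-dependence for $i>r$, so that $\partial_{\lambda_i}u$ coincides with $\bigl[\partial_{\lambda_i}v(\cdot,t',\lambda)\bigr]_{t'=t}$ --- is the right observation, and it is immediate from the definition (\ref{eq:DefG}).
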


	In the following, let $x_{-j} = (x_1,\ldots,x_{j-1},x_{j+1},\ldots,x_N)$ and let $dx_{-j}$ be shorthand for $dx_1\ldots dx_{j-1}dx_{j+1}\ldots dx_N$.
	We also write $\sqrt{\lambda} = (\sqrt{\lambda_1},\ldots,\sqrt{\lambda_N})$ and $x\cdot y = (x_1 y_1,\ldots,x_N y_N)$ for the element-wise product.
	
	
	
	\begin{pro}\label{lem:diffutilde}
	Let $g \in C^b$ be such that, for
		\begin{eqnarray*}
			G_j(z_{-j},z_j,t,\lambda) = \int_{\mathbb{R}^{N-1}} \Phi^{\{1,\ldots,N-1\}}(x_{-j},t,1) \, g(z-\sqrt{\lambda}\cdot (x_1,\ldots,x_{j-1},0,x_{j+1},\ldots,x_N) ) \dx_{-j},
		\end{eqnarray*}
		and every fixed $z_{-j},t,\lambda$, $G_j(z_{-j},\cdot,t,\lambda)$ is piecewise $C^2$, 
		i.e., there
		is a function $\widetilde{G}_j \in C^{2,1,mix}$, such that
		\begin{eqnarray}
		\label{G-decomp}
		G_j(z_j) =  \widetilde{G}_j(z_j) + \sum_{i=0}^N c_{i,j} \max(z_j-a_{i,j},0) + 
		 \sum_{i=0}^N b_{i,j} H(z_j-a_{i,j}),
		\end{eqnarray}
		where $H$ is the Heaviside function and $a_{i,j}, b_{i,j}, c_{i,j}$ given.
	
    Then the following holds:
    \begin{enumerate}
        \item $\frac{\partial}{\partial\lambda_j}u(z,t,\lambda)$ exists for all $z$ and all $\lambda$ with $\lambda_j > 0$;
        
		\item for a given $z_j$,
		\begin{eqnarray*}
			\left.\frac{\partial}{\partial\lambda_j} u(z,t,\lambda)\right|_{\lambda_j = 0} &=& t \, \left. \frac{\partial^2 G_j}{\partial z_j^2}(z_{-j},z_j,t,\lambda)
			\right|_{\lambda_j = 0},
			\end{eqnarray*}
			if the derivative 
			on the right-hand side exists.
    \end{enumerate}
	\end{pro}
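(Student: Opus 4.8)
The plan is to reduce to a one-dimensional statement in the variable $z_j$ and exploit the fact that $u$ can be written, after integrating out all directions other than $j$, as a one-dimensional heat semigroup applied to $G_j$. First I would write, using the product structure of the Green's function $\Phi$ in (\ref{eq:HeatGreensNdimNu}), the identity
\begin{eqnarray*}
u(z,t,\lambda) = \int_{\mathbb{R}} \frac{\exp(-y_j^2/(4\lambda_j t))}{\sqrt{4\pi\lambda_j t}} \, G_j(z_{-j}, z_j - y_j, t, \lambda) \, dy_j,
\end{eqnarray*}
i.e., $u$ is the one-dimensional heat kernel (diffusivity $\lambda_j$, time $t$) applied to the function $z_j \mapsto G_j(z_{-j},z_j,t,\lambda)$, with the other variables and parameters frozen. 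This is just Fubini applied to the Gaussian integral defining $u$, splitting the $j$-th coordinate off from the rest; the remaining $(N-1)$-fold integral is exactly $G_j$.

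Next, for $\lambda_j > 0$, I would differentiate this one-dimensional convolution in $\lambda_j$. Since the heat kernel $p_{\lambda_j t}(y)=(4\pi\lambda_j t)^{-1/2}\exp(-y^2/(4\lambda_j t))$ satisfies $\partial_{\lambda_j} p_{\lambda_j t} = t\, \partial_y^2 p_{\lambda_j t}$, and $G_j$ is bounded (it is an average of the bounded function $g$), differentiation under the integral sign is justified for $\lambda_j>0$ by the usual Gaussian decay estimates (the $\lambda_j$-derivative of the kernel is dominated uniformly on compact subsets of $\{\lambda_j>0\}$ by an integrable function). Moving the two $y$-derivatives onto $G_j$ by integration by parts — legitimate because of the decomposition (\ref{G-decomp}): $\widetilde G_j$ is $C^2$ with bounded second derivative, and the kink/jump pieces $\max(z_j-a_{i,j},0)$ and $H(z_j-a_{i,j})$ are handled by computing their distributional second derivatives (a Dirac mass, resp.\ a derivative of a Dirac mass) against the smooth kernel, which is harmless away from $\lambda_j=0$ — gives
\begin{eqnarray*}
\frac{\partial}{\partial\lambda_j} u(z,t,\lambda) = t \int_{\mathbb{R}} p_{\lambda_j t}(y_j) \, \frac{\partial^2 G_j}{\partial z_j^2}(z_{-j}, z_j - y_j, t,\lambda)\, dy_j,
\end{eqnarray*}
interpreted as a pairing when $\partial_{z_j}^2 G_j$ contains singular parts. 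This proves existence for $\lambda_j>0$ and establishes the formula claimed in part 2 as an identity valid for $\lambda_j>0$.

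Finally, for part 2, I would take the limit $\lambda_j \downarrow 0$ in the displayed formula. As $\lambda_j\to 0$, $p_{\lambda_j t}$ is an approximate identity, so the right-hand side converges to $t\,\partial_{z_j}^2 G_j(z_{-j},z_j,t,\lambda)\big|_{\lambda_j=0}$ \emph{provided} that second derivative exists (in the classical, pointwise sense) at the point $z_j$ in question — which is exactly the hypothesis imposed. Concretely: if $\partial_{z_j}^2 G_j$ exists and is continuous near $z_j$ when $\lambda_j=0$, then the singular pieces in (\ref{G-decomp}) must be absent near $z_j$ (the kinks and jumps have washed out or were never there), so $\partial_{z_j}^2 G_j$ is a genuine bounded continuous function locally and the approximate-identity convergence is standard. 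The main obstacle is the bookkeeping at $\lambda_j=0$: one must argue that the difference quotient $(u(\lambda_j e_j + \cdots) - u(\cdots))/\lambda_j$ actually converges — not merely that the candidate formula behaves well — which requires either (i) showing $\lambda_j \mapsto u$ is $C^1$ on $[0,\varepsilon)$ by establishing that $\partial_{\lambda_j}u$ extends continuously to $\lambda_j=0$ under the stated hypothesis and invoking the fundamental theorem of calculus, or (ii) a direct estimate on the difference quotient using the semigroup representation. I would take route (i): continuity of $\lambda_j\mapsto \partial_{\lambda_j}u$ up to $0$ follows from dominated convergence in the integral formula once we know $\partial_{z_j}^2 G_j|_{\lambda_j=0}$ exists and is bounded near $z_j$, and then $u(z,t,\lambda^0+\lambda_j e_j) - u(z,t,\lambda^0) = \int_0^{\lambda_j} \partial_s u\, ds$ gives the one-sided derivative at $0$ equal to the claimed value.
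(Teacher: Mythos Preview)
Your proof is correct and follows essentially the same strategy as the paper: both use the decomposition (\ref{G-decomp}) of $G_j$ into a smooth part, kink parts, and jump parts, and treat each contribution separately. The paper handles the smooth term via Proposition \ref{pro:smooth} and defers the kink/jump terms to the explicit one-dimensional computations in Examples \ref{ex:GeometricBasket} and \ref{ex:DigitalGeometricBasket} (and \cite{W15}), whereas you package the same calculations through the identity $\partial_{\lambda_j}p_{\lambda_j t}=t\,\partial_y^2 p_{\lambda_j t}$, distributional second derivatives $\delta$ and $\delta'$ for the kink and jump pieces, and an approximate-identity limit at $\lambda_j=0$; your mean-value-theorem route (i) for passing from the continuous extension of $\partial_{\lambda_j}u$ to the one-sided derivative at $0$ is exactly the missing bookkeeping the paper leaves to the reference. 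Your presentation is somewhat more self-contained and makes transparent why the singular contributions $p_{\lambda_j t}(z_j-a_{i,j})$ and $p'_{\lambda_j t}(z_j-a_{i,j})$ vanish as $\lambda_j\downarrow 0$ whenever $z_j\neq a_{i,j}$, which is precisely the condition ``if the derivative on the right-hand side exists''.
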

	\begin{proof}
	This follows by studying the terms in (\ref{G-decomp}) individually. The smooth part is covered by Proposition \ref{pro:smooth},
	the piecewise linear Lipschitz and step function parts are as in Examples \ref{ex:GeometricBasket} and \ref{ex:DigitalGeometricBasket}
	 in Appendix \ref{subsec:exnonsmooth}.
	For a more explicit proof see \cite{W15}.
	\end{proof}
	
%


    Proposition \ref{lem:diffutilde} gives a sufficient but indirect condition for the existence of $\lambda$-derivatives in terms of $G_j$.
    The same degree of smoothness is needed for the error bounds in Section \ref{sec:ErrorBounds}.
    To understand what conditions on $g$ guarantee that $G_j$ satisfies the requirements, it is instructive to consider first the two-dimensional case. The key idea is that integration in one direction, $z_1$, helps smooth out discontinuities in $z_2$, as long these 
    are not orthogonal to $z_1$.

    \begin{ex}[Mixed smoothness]
    \label{ex:smoothing}
    Consider Figure \ref{fig:smoothing}. 
               \begin{figure}[tbp]
\begin{center}
\begin{minipage}[r]{0.9\textwidth}
\begin{center}
\begin{tikzpicture}[scale=0.9]
    \begin{axis}[ylabel=$z_2$, xlabel=$z_1$, xmin=0, xmax=10, ymin = 0,ymax=10]
        \addplot+[fill,color=lightgray,no markers] coordinates
            {(.5,.5) (.5,8) (9.5,8) (9.5,5) (5,5)} --cycle;

        \addplot+[only marks,mark options={color=black,fill=black},mark size=2,nodes near coords,point meta=explicit symbolic,every node near coord/.style={anchor=315}] coordinates                {
            (2,8) [$p_1$]
            (2,6.5) [$p_2$] 
            (2,5) [$p_3$]
            (2,3.5) [$p_4$] 
            (2,2) [$p_5$]
        };
	\end{axis}
\end{tikzpicture}
\end{center}
\end{minipage}
\end{center}
\caption{Schematic of the payoff function for Example \ref{ex:smoothing}, to illustrate how integration in $z_1$ affects the differentiability in $z_2$: $g\equiv 1$ inside the shaded area and $g\equiv 0$ everywhere else.}
\label{fig:smoothing}
\end{figure}
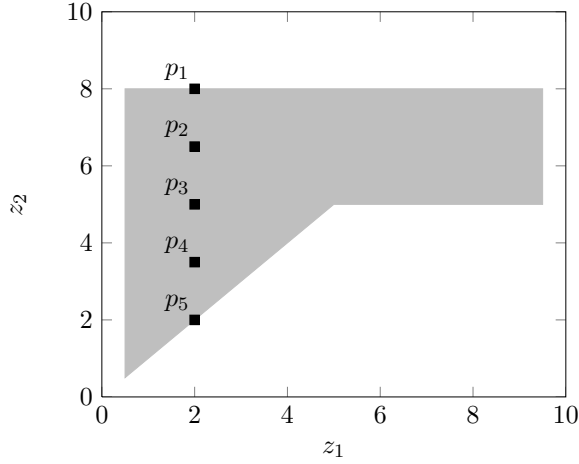
    Out of the five marked points, $g$ is twice continuously differentiable only in points $p_2$, $p_3$ and $p_4$. Once integrated over $z_1$, the resulting function $G_1$ is twice continuously differentiable in $p_2$, $p_4$ and $p_5$. Integration helps to smooth out the discontinuity in $p_5$. It does not help for $p_1$, where the kink is orthogonal to the $z_2$-direction and the discontinuity is orthogonal to $z_1$. Integration over $z_1$ decreases the smoothness in $p_3$, since $g$ is differentiable in $p_3$ but not differentiable on $\{ (z_1,z_2) : 5\leq z_1\leq 10, z_2 = 5\}$.
    \end{ex}

     \begin{ex}[Single kink or discontinuity]
    \label{ex:smoothing2}
    This is essentially the setting also studied in \cite{GKS10}.
    Assume now that $g : \mathbb{R}^2\rightarrow \mathbb{R}$ is twice continuously differentiable everywhere but on a 
    set
    \begin{eqnarray*}
    \{ (z_1,z_2) : f(z_1,z_2) = 0 \},
    \end{eqnarray*}
    given by the roots of a smooth function $f: \mathbb{R}^2\rightarrow \mathbb{R}$. For simplicity assume a single kink or discontinuity.
    More specifically, we assume that $\frac{\partial f}{\partial z_1}>0$ (or $<0$) everywhere.
    
    Then for every value of $z_2$ there is at most one value of $z_1$ such that $f(z_1,z_2) = 0$, and,
    using the implicit function theorem there exists a smooth function $b: \mathbb{R} \rightarrow \mathbb{R}$ describing the location of the kink or discontinuity,
    \begin{eqnarray*}
        f(b(z_2),z_2) = 0 \quad \forall z_2\in \mathbb{R}.
    \end{eqnarray*}
   
    \noindent
    Writing out the partial derivative now leads to
    \begin{eqnarray*}
    \frac{\partial}{\partial z_2}G_1(z,t,x_2,\lambda)
    &=& \frac{\partial}{\partial z_2} \int_{\mathbb{R}} \Phi(x_1,t,1) g(z-\sqrt{\lambda}\cdot x) dx_1\\
\hspace{-2 cm}    &\hspace{-2.2 cm}=&\hspace{-1.1 cm} \frac{\partial}{\partial z_2} \left[ \int_{-\infty}^{b(z_2)} \Phi(x_1,t,1) g(z-\sqrt{\lambda}\cdot x) dx_1 + \int_{b(z_2)}^{\infty} \Phi(x_1,t,1) g(z-\sqrt{\lambda}\cdot x) dx_1 \right]\\
\hspace{-2 cm}     &\hspace{-2.2 cm}=&\hspace{-1.1 cm} \frac{\partial b}{\partial z_2}(z_2) \Phi(b(z_2),t,1) \left[ g(z-\sqrt{\lambda}\cdot (b(z_2)^-,x_2)) - g(z-\sqrt{\lambda}\cdot (b(z_2)^+,x_2))\right]\\
    &&\hspace{-1.1 cm} \;+ \int_{-\infty}^{b(z_2)} \Phi(x_1,t,1) \frac{\partial g}{\partial z_2}(z-\sqrt{\lambda}\cdot x) dx_1 + \int_{b(z_2)}^{\infty} \Phi(x_1,t,1) \frac{\partial g}{\partial z_2}(z-\sqrt{\lambda}\cdot x) dx_1.
    \end{eqnarray*}
 Here, $b$ is a smooth function and $g$ is $C^2$ in $z_2$ everywhere but at the kink. Thus every term in the sum and the sum itself are continuously differentiable in $z_2$. This can also be seen by writing out the second derivative.
    
    
    It is straightforward to
    generalise this argument to finitely many kink points per $z_2$ by observing that $f$ is then locally invertible. 
    
    \end{ex}

In higher dimensions, for $z_j$-derivatives of $G$ to exist in a point $z^0 = (z_1^0,\ldots, z_N^0)$, it is sufficient that either 
\begin{itemize}
\item
$g$ is sufficiently $z_j$-differentiable for all points in a plane through $z^0$ orthogonal to the $z_j$-axis, or,
\item
smoothing of any discontinuities occurs through integration over $z_1,\ldots, z_r$, which happens if the location of the discontinuities is not parallel to all $z_1,\ldots, z_r$. This is guaranteed if the set is locally described by 
\[
f(z_1,\ldots,z_r; z_{r+1}^0,\ldots,z_{j+1}^0,z_j,z_{j+1}^0,\ldots, z_N^0)=0,
\]
for fixed
$z_{r+1}^0,\ldots,z_{j-1}^0,z_{j+1}^0,\ldots, z_N^0$, with smooth $f$, and
\[
\frac{\partial}{\partial z_k}f(z_1,\ldots,z_r; z_{r+1}^0,\ldots,z_{j+1}^0,z_j,z_{j+1}^0,\ldots, z_N^0) \neq 0
\]
for at least one $1\le k\le r$, i.e., the kink set is not parallel to the plane and smoothing occurs across it in at least one variable $z_k$.
\end{itemize}

\section{Numerical computations}\label{sec:Numerics}

	\subsection{Definition of the model problem}
	\label{subsec:defmodel}
	
	Consider the Black-Scholes model 
	with stock price processes $S_1,\ldots,S_N$ following
	\begin{eqnarray}
		\dS_{i,t} = r_{f} S_{i,t} \dt + \sigma_i S_{i,t} \dW_{i,t} \;\;\; 1\leq i \leq N,
		\label{blackscholes}
	\end{eqnarray}
	where $r_{f}\in \mathbb{R}$ is the risk-free interest rate and $\sigma \in \mathbb{R}^N$ are
the constant volatilities. For the standard $N$-dimensional Brownian motion $W$ 
we take a correlation matrix 
	\begin{eqnarray}
	\label{eqn:corr}
		\rho = \left( \begin{array}{ccccc}
		1 & \gamma & \gamma & \cdots & \gamma\\
		\gamma & 1 & \gamma & \cdots & \gamma\\
		\vdots & & \ddots & \vdots\\
		\gamma & \gamma & \gamma & \cdots & 1
		\end{array} \right),
	\end{eqnarray}
	for some $\gamma \in (-1,1)$.
	The covariance matrix $\Sigma \in \mathbb{R}^{N\times N}$ is given by $\Sigma_{ij} = \sigma_i\sigma_j\rho_{ij}$. 
	In the simplest case of identical volatilities $\sigma_1=\ldots = \sigma_N = \sigma$,
this implies that there are at most two distinct eigenvalues $\lambda_1\geq \lambda_2$ (with $\lambda_1 = \lambda_2$ if and only if $\gamma=0$) such that $\lambda=(\lambda_1,\lambda_2,\lambda_2,\ldots,\lambda_2)$. The normalized eigenvector corresponding to $\lambda_1$ is $q_1= (1,\ldots,1)/\sqrt{N}$ and the other eigenvectors can be chosen as any orthogonal basis of the orthogonal complement of $\{q_1\}$.
	Writing the covariance matrix as $\sigma^2 (\gamma E + (1-\gamma) I)$, where $I$ is the $N\times N$ identity matrix and $E$ the matrix full of ones, and noting that $E q_1 = N q_1$ and $E q_2 = 0$, one finds the eigenvalues $\lambda_1 = \sigma^2 ((N-1) \gamma + 1)$ and $\lambda_2 = \sigma^2 (1- \gamma)$.
	In particular, $\lambda_2$ and the higher eigenvalues do not depend on $N$. 
	
	This covariance matrix is clearly stylised, however, numerical tests with more general models confirm that the dominant effect comes from the spectral gap after the $r$th (here, first) eigenvalue, whereas the more detailed structure of the correlations is less relevant.
For instance, \cite{RW13} uses a correlation matrix with elements $\rho_{ij} = \exp(-\alpha |i-j|)$ for some $\alpha>0$ in the context of the LIBOR market model, and analyses numerically the impact of $\gamma$ on the eigenvalues and the numerical accuracy of truncated expansion approximations to Bermudan swaptions with $N=5,11,21,41,\cdots,101$ tenors.
Closer to the test cases here, \cite{RW07} gives results for arithmetic basket options, but with an empirically estimated covariance matrix for the DAX, i.e.\ $N=30$. 
There, the eigenvalues jump by roughly a factor of 10 after the first, and then decay slowly but exponentially. A similar size jump is observed in Figure \ref{fig:varlambda2} for $\gamma \approx 0.4$ to $0.5$, a typical correlation between equity prices.
The accuracy for the example in \cite{RW07} is $0.73$ basis points.

	For simplicity, we choose $r_{f} = 0$. The arbitrage-free price $V(S_0,0)$ at time $0$,
of a financial derivative with payoff $h(S_{T})$ at time $T$ 
is then given by
    \begin{eqnarray}
        V(S_0,0) = \mathbb{E}\left[\; h(S_T) \; | \; S_0 \;\right]\label{eq:FairPrice}.
    \end{eqnarray}
    We can estimate this value with desired accuracy using, for example, Monte Carlo simulation with a sufficiently large number of samples. This provides a reference to compare the PDE solution to.
    
    The expectation (\ref{eq:FairPrice}) is given by the solution $u(\log(S_0),T)$ of
    \begin{eqnarray*}
        \frac{\partial u}{\partial t} &=& \sum_{i,j=1}^N  \sigma_i\sigma_j\rho_{ij} \frac{\partial^2 u}{\partial x_i \partial x_j} - \sum_{i=1}^N \frac{\sigma_i^2}{2} \frac{\partial u}{\partial x_i}, \\
        u(x,0) &=& g(\exp(x)),
    \end{eqnarray*}
    where we have used a similar (logarithmic) transformation as in Appendix \ref{subsec:exnonsmooth}.

For constant, positive semidefinite
coefficient matrix $\Sigma = (\sigma_i \sigma_j \rho_{ij})_{1\leq i,j\leq N}$,
	we can choose $Q$ to be the matrix of eigenvectors of $\Sigma$ sorted by eigenvalue size, i.e.,
    \begin{eqnarray}
    Q = (q_1,\ldots,q_N),\;\; \frac{1}{2} \Sigma q_i = \lambda_i q_i,\;\; \lambda_1 \geq \ldots\geq\lambda_N \geq 0, 
    \end{eqnarray}
    and get $\Lambda = \diag(\lambda_1,\ldots,\lambda_N)$ a constant diagonal matrix.

Introducing the new coordinates
\[
z(x,t) = Q^T (x + \mu t),
\]
with $\mu = (-\sigma_i^2/2)_{i=1,\ldots,N}$ and inverse transform $x(z,t) = Q z - \mu t$,
leads to the heat equation (\ref{eq:PDEHeat}, \ref{eq:PDEHeatBoundary}) with $g(z)=h(x(z,0))$.

\subsection{Numerical approximation of low-order terms}
\label{subsec:numericalPDEs}

For each low-dimensional term in the decomposition, a PDE on an unbounded spatial domain is solved.\footnote{In the present case of constant coefficients, semi-analytic or Fourier methods would be even more efficient to solve these sub-problems.} To avoid the introduction of artificial boundary conditions necessary when localising the domain, for each coordinate $z_i$ we map the interval $(-\infty,\infty)$ to $(0,1)$ via
\begin{align*}
	y_i = \frac{1}{\pi}\arctan\left(b_i z_i + c_i\right)+ \frac{1}{2},
\end{align*}
with parameters $b_i$ and $c_i$.

Under a standard growth condition on the solution at infinity, the resulting PDE is fully specified without boundary conditions at $z_i \in \{0,1\}$, because the resulting non-constant coefficients of the transformed diffusion equation vanish sufficiently fast at the boundaries (see \cite{RW07,ZL03}). For options with unbounded payoffs towards infinity, such as European calls, we introduce an artificial numerical cutoff. This does not impact the computed option value noticeably.

We consider an equidistant mesh in the $y_i$ coordinates, such that in original coordinates the mesh is denser in the most relevant regions. This density can be adjusted by choosing the parameters $b_i$ and $c_i$, which is performed heuristically.

We use the Crank-Nicolson time discretization scheme with central spatial differences. In the one-dimensional case, solving the PDE is then straightforward. In the two- and three-dimensional case, we combine this with an Alternating Direction Implicit (ADI) factorisation as detailed below, such that the resulting tridiagonal matrix systems can be solved efficiently in linear time (i.e., proportional to the system size). An initial LU factorisation of the tridiagonal matrices gives significant further speed-up.

	\subsection{Variance reduction of Monte Carlo estimators}
	\label{subsec:varred}
	
We consider now a simulation-based method for solving the PDE (\ref{eq:PDEHeat}, \ref{eq:PDEHeatBoundary}), making use of the
expansion method as outlined in Section \ref{sec:ExpansionMethod}. We will use this for numerical verification only, but envisage this to be a good standalone variance reduction method more generally.

We recall from (\ref{anova1}) the decomposition
\begin{eqnarray}
u(z,0,\lambda) &=& \!\!\! \sum_{\alpha \in \{0,1\}^N} 
\underbrace{\delta\lambda^\alpha \Delta^\alpha u(z,0,\lambda^0)}_{=:u_\alpha}.
\label{def:ualpha}
\end{eqnarray}
We consider independent estimators $\widehat{U}_\alpha$ for $u_\alpha$ in (\ref{def:ualpha}), and an estimator
\begin{eqnarray*}
\widehat{U} &=& \!\!\! \sum_{\alpha \in \{0,1\}^N}  \widehat{U}_\alpha
\end{eqnarray*}	
for $u$. If the single sample variance of $\widehat{U}_\alpha$ is $v_\alpha$, then, using $N_\alpha$ independent samples for $\widehat{U}_\alpha$,
\begin{eqnarray*}
Var\left[ \widehat{U} \right] &=& \!\!\! \sum_{\alpha \in \{0,1\}^N} \frac{v_\alpha}{N_\alpha},
\end{eqnarray*}	
using independent samples for different $\alpha$.
We expect from the theoretical results in Section \ref{sec:ErrorBounds}, given enough smoothness,
$v_\alpha = O(\lambda_k^2)$ for the first order corrections, and 
$v_\alpha = O(\lambda_k^2 \lambda_l^2)$ for the second order corrections. 

    
    To achieve an overall mean-square error of size $\epsilon$ for the first order terms, one thus needs $O(\lambda_k^2 \epsilon^{-2})$ MC samples, instead of the $O(\epsilon^{-2})$ required for the 
    naive 
    approach of simulating $\widehat{U}$ directly with a single set of samples. In the cases below, $\lambda_k$ was often in the order of $10^{-2}$, which resulted in a significant run time reduction by a factor of about $10, 000$. 


Estimating the first order approximation $u_{1,1}$ in that way, requires $N$ individual estimators. Computing the whole expansion in this way would require $2^N$ estimators and therefore would be infeasible for large $N$, and would stop to be advantageous already for moderate $N$. A better approach is to compute an accurate approximation to $u_{1,1}$ using finite difference methods, and then use a Monte Carlo estimator
$\widehat{U}_{1,1}$ for $\hat{u}_{1,1}$, the difference  between $u$ and $u_{1,1}$, with a much smaller variance than estimating $u$ directly,
and similar for higher order. This is the approach taken in the computations in Sections \ref{subsec:numfirst} and \ref{subsec:numsecond}
for the smallest values of $\lambda$.

	\subsection{Convergence of the first order expansion}
    \label{subsec:numfirst}
    
    \begin{figure}[tbp]
    \begin{minipage}[r]{0.3\textwidth}
\begin{tikzpicture}[scale=.6]
    \begin{axis}[ylabel=$\Delta$, xlabel=$\lambda_2$, xmode=log, ymode=log, ymin=0.0000001, ymax=1, xmin=.0001, xmax=.1, title={$\omega_1$}]
       \addplot+[only marks,mark=-,mark options={color=black,fill=black},mark size=3] coordinates {
(0.024,0.0170014434164)
(0.02,0.0107028157909)
(0.016,0.0063269810485)
(0.012,0.00294553740244)
(0.008,0.00130794540049)
(0.004,0.000355126467481)
(0.002,8.26483784071e-05)
(0.0008,1.29292160655e-05)
(0.0004,3.40717369403e-06)
};
\addplot+[only marks,mark=x,mark options={color=black,fill=black},mark size=3] coordinates {
(0.024,0.0167438058213)
(0.02,0.0104190432586)
(0.016,0.00601877181581)
(0.012,0.00261425013254)
(0.008,0.000954682068198)
(0.004,0.000354515981159)
(0.002,8.24332533762e-05)
(0.0008,1.28749875969e-05)
(0.0004,3.38823042156e-06)
};
\addplot+[only marks,mark=-,mark options={color=black,fill=black},mark size=3] coordinates {
(0.024,0.0164861682262)
(0.02,0.0101352707264)
(0.016,0.00571056258313)
(0.012,0.00228296286264)
(0.008,0.0006014187359)
(0.004,0.000353905494837)
(0.002,8.22181283453e-05)
(0.0008,1.28207591284e-05)
(0.0004,3.36928714909e-06)
};


	\end{axis}
\end{tikzpicture}
\end{minipage}
\begin{minipage}[c]{0.01\textwidth}
$\,$
\end{minipage}
\begin{minipage}[l]{0.3\textwidth}
\begin{tikzpicture}[scale=.6]
    \begin{axis}[ylabel=$\Delta$, xlabel=$\lambda_2$, xmode=log, ymode=log, ymin=0.0000001, ymax=1, xmin=.0001, xmax=.1, title={$\omega_2$}]
		\addplot+[only marks,mark=-,mark options={color=black,fill=black},mark size=3] coordinates {
(0.024,0.0170035121579)
(0.02,0.0107044005044)
(0.016,0.00632816946199)
(0.012,0.00294637820753)
(0.008,0.00130847185346)
(0.004,0.000404779532331)
(0.002,0.000108990760118)
(0.0008,2.44863080683e-05)
(0.0004,9.17733009426e-06)
};
\addplot+[only marks,mark=x,mark options={color=black,fill=black},mark size=3] coordinates {
(0.024,0.0167438058213)
(0.02,0.0104190432586)
(0.016,0.00601877181581)
(0.012,0.00261425013254)
(0.008,0.000954682068198)
(0.004,0.000358527487236)
(0.002,8.52810295768e-05)
(0.0008,1.37638030466e-05)
(0.0004,3.44283445486e-06)
};
\addplot+[only marks,mark=-,mark options={color=black,fill=black},mark size=3] coordinates {
(0.024,0.0164840994848)
(0.02,0.0101336860129)
(0.016,0.00570937416964)
(0.012,0.00228212205756)
(0.008,0.000600892282937)
(0.004,0.000312275442142)
(0.002,6.15712990355e-05)
(0.0008,3.04129802497e-06)
(0.0004,0)
};


	\end{axis}
\end{tikzpicture}
\end{minipage}
\begin{minipage}[c]{0.01\textwidth}
$\,$
\end{minipage}
\begin{minipage}[r]{0.3\textwidth}
\begin{tikzpicture}[scale=.6]
    \begin{axis}[ylabel=$\Delta$, xlabel=$\lambda_2$, xmode=log, ymode=log, ymin=0.0000001, ymax=1, xmin=.0001, xmax=.1, title={$\omega_3$}]
        \addplot+[only marks,mark=-,mark options={color=black,fill=black},mark size=3] coordinates {
(0.024,0.146958332314)
(0.02,0.0871541946763)
(0.016,0.0436337926949)
(0.012,0.0285122347656)
(0.008,0.00683889418042)
(0.004,0.00148053388164)
(0.002,0.000392912394702)
(0.0008,9.61021378078e-05)
(0.0004,2.8406017238e-05)
};
\addplot+[only marks,mark=x,mark options={color=black,fill=black},mark size=3] coordinates {
(0.024,0.146621666397)
(0.02,0.0868065323386)
(0.016,0.043275739207)
(0.012,0.0281443498883)
(0.008,0.00646167373177)
(0.004,0.00141339252209)
(0.002,0.000357186769507)
(0.0008,7.73818127566e-05)
(0.0004,1.58880076384e-05)
};
\addplot+[only marks,mark=-,mark options={color=black,fill=black},mark size=3] coordinates {
(0.024,0.146285000479)
(0.02,0.0864588700009)
(0.016,0.042917685719)
(0.012,0.0277764650109)
(0.008,0.00608445328312)
(0.004,0.00134625116254)
(0.002,0.000321461144311)
(0.0008,5.86614877054e-05)
(0.0004,3.36999803885e-06)
};


	\end{axis}
\end{tikzpicture}
\end{minipage}
\begin{center}
\footnotesize
\begin{tabular}{cccccccccc}
$\gamma$ & 0.4 & 0.5 & 0.6 & 0.7 & 0.8 & 0.9 & 0.95 & 0.98 & 0.99 \\ \hline\hline
$\lambda_1$ & 0.184 & 0.220 & 0.256 & 0.292 & 0.328 & 0.364 & 0.382 & 0.3928 & 0.3964 \\
$\lambda_2$ & 0.024 & 0.020 & 0.016 & 0.012 & 0.008 & 0.004 & 0.002 & 0.0008 & 0.0004
\end{tabular}
\normalsize
\end{center}
\caption{
Absolute difference  $\Delta$ (marked $\times$) between PDE and MC results with $3\sigma$ error bounds ($\scriptstyle -$), for varying $\lambda_2$ and payoff weights $\omega_1$, $\omega_2$ and $\omega_3$ from (\ref{omega1}), (\ref{omega2}) and (\ref{omega3}), using the first order expansion method. The best fit exponents for the differences are $2.04\pm 0.11$, $2.03\pm 0.11$ and $2.21\pm 0.17$ ($95\%$ confidence bounds from a linear regression on a log-log scale).}
\label{fig:varlambda2}
\end{figure}
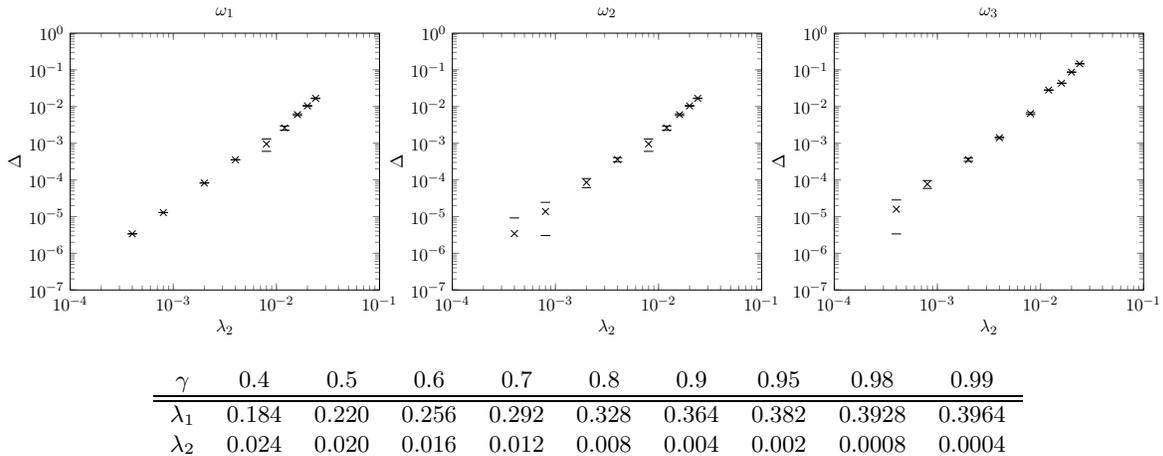
    
    We consider first a European arithmetic basket option with maturity $T = 1$ and payoff
    \begin{eqnarray*}
        g(S) &=& \max\left( \sum_{i=1}^N \omega_i S_i - K , 0 \right),
    \end{eqnarray*}
    where $\omega \in \mathbb{R}^N$ is a vector of weights and $K$ is the strike, which we set to $K=100$. 
We study the solution at the coordinates $S_{i,0} = 100$ for all $i$.
    The numerical tests are for $N = 10$ and $\sigma_i = 0.2$ for $i=1,\ldots, N$. Other details are as specified in Section \ref{subsec:defmodel}.
   
    Because of the structure of $\Sigma$ with one dominant eigenvalue, we choose $r=1$ and expand around the point $\lambda^0 = (\lambda_1,0,\ldots,0)$. Figure \ref{fig:varlambda2} shows results for 
    \begin{eqnarray}
\label{omega1}
        \omega_1 &=& (1/10,1/10,1/10,1/10,1/10,1/10,1/10,1/10,1/10,1/10), \\ \label{omega2}
        \omega_2 &=& (4/30,4/30,4/30,4/30,4/30,2/30,2/30,2/30,2/30,2/30), \\ \label{omega3}
        \omega_3 &=& (1/4,1/4,1/4,1/4,1/4,1/4,1/4,-1/4,-1/4,-1/4).
    \end{eqnarray}
Plotted is the difference $\Delta$ between the first order expansion solution $u_{1,1}$ from Example \ref{ex-first-order}, and a Monte Carlo estimate of the true solution, such that $\Delta$ can be considered an estimate of the error $\widehat{u}_{1,1}$ from (\ref{eqn:truncerr}), with $r=m=1$,
and Theorem \ref{thm:BoundPDEDiff2}.
By varying $\gamma$ in (\ref{eqn:corr}), we vary $\lambda_2=\lambda_3=\ldots=\lambda_N$, and can therefore examine the expansion error as a function of the eigenvalues.

The one- and two-dimensional PDEs for $u^{\{1\}}$ and $u^{\{1,k\}}$ for $k=2,\ldots,N$ were solved numerically as described in Section
\ref{subsec:numericalPDEs}. Specifically, we use the Peaceman-Ratchford ADI scheme \cite{MR55}
with $J = 800$ grid points in each spatial direction and $M=50$ time steps.
The MC estimator was constructed by exact sampling of the joint lognormal distribution of the asset price vector $S_T$ from (\ref{blackscholes}) and computation of the sample mean of the payoff $g(S_T)$ over $10^{10}$ samples to find $V(S_0,0)$ by (\ref{eq:FairPrice}). 
	
		For very small $\lambda_2$ as we need for the study of the asymptotic error, the variance of straightforward MC was too large to assess the accuracy of the expansion approximation.
		To verify that $u-u_{1,1} = O(\lambda_2^2)$ by a standard MC estimator for $u$ and PDE solution for $u_{1,1}$, one needs a standard error of $O(\lambda_2^2)$ or smaller for $u$, i.e.\ $O(\lambda_2^4)$ samples. For the four smallest values of $\lambda_2$ in the tests, $\lambda_2\le 0.004$ and $\lambda_2^4 \approx 2.5 \times 10^{-10}$, such that the results for a feasible number of samples were too noisy for a meaningful comparison.
		We thus chose a control variate approach as outlined in Section \ref{subsec:varred}, in which the difference between the full and approximated solution was calculated via a MC estimator using the same paths for both terms.

Figure \ref{fig:varlambda2} suggests that the difference $\Delta$ between the truncated and full solution follows a power law of the form
        $\Delta \sim  \lambda_2^p$. 
The results reported in Figure \ref{fig:varlambda2} are consistent with the theoretical order of $2$ established in Theorem \ref{thm:BoundPDEDiff2}. In absolute terms, the error for practically relevant correlations around 0.5 is approximately $10^{-2}$ (with the exception of the more challenging payoff given by $\omega^3$), so approximately one basis point in relation to $S_{i,0}=100$.

    \subsection{Convergence of the second order expansion}
    \label{subsec:numsecond}
    
    We consider the same arithmetic basket option and model as in Section \ref{subsec:numfirst}, but with $N=5$ (as opposed to $N=10$ in the first-order case). This allows us to examine the expansion order with significantly reduced computation time, as more (now $O(N^2)$ instead of $O(N)$) and higher-dimensional (now 3-dimensional) PDEs need to be solved in the second-order expansion $u_{1,2}$ from Example \ref{pro:SecondOrderExp}.
    
    The one-, two-, and three-dimensional PDEs for, respectively, $u^{\{1\}}$, $u^{\{1,k\}}$, and $u^{\{1,k,l\}}$ for $2 \le k<l\le N$ where solved numerically as described in Section \ref{subsec:numericalPDEs}. In addition to the one-and two-dimensional PDEs already discussed in Section
 \ref{subsec:numfirst}, we used Brian's ADI method \cite{B61,CCC91} with $J = 500$ grid points in each spatial direction and $M=50$ time steps for the numerical solution of the three-dimensional PDE.
    The MC estimator was constructed in exactly the same way as in Section \ref{subsec:numfirst}.
     For the four smallest values of $\lambda_2$ we again directly compute the difference between full and approximate solution via MC, as described in Section \ref{subsec:varred}.

     \begin{figure}[tbp]
\begin{center}
    \begin{minipage}[c]{0.45\textwidth}
    \begin{center}
\begin{tikzpicture}[scale=.8]
    \begin{axis}[ylabel=$\Delta$, xlabel=$\lambda_2$, xmode=log, ymode=log, ymax=10, ymin=0.00000001, xmin=.0001, xmax=.1, title={$\omega_1$}]
       \addplot+[only marks,mark=-,mark options={color=black,fill=black},mark size=3] coordinates {
(0.024,0.988523649401)
(0.02,0.507907594488)
(0.016,0.130546885728)
(0.012,0.171003917159)
(0.008,0.0226664721942)
(0.004,0.0033110560836)
(0.002,0.00113434398665)
(0.0008,8.92034109225e-05)
(0.0004,8.51814269697e-06)
};
\addplot+[only marks,mark=x,mark options={color=black,fill=black},mark size=3] coordinates {
(0.024,0.987832217757)
(0.02,0.507250115531)
(0.016,0.129927069209)
(0.012,0.170426514882)
(0.008,0.0221380013347)
(0.004,0.00284117246681)
(0.002,0.000699257000001)
(0.0008,4.58750109225e-05)
(0.0004,1.86204934671e-06)
};
\addplot+[only marks,mark=-,mark options={color=black,fill=black},mark size=3] coordinates {
(0.024,0.987140786113)
(0.02,0.506592636574)
(0.016,0.12930725269)
(0.012,0.169849112606)
(0.008,0.0216095304752)
(0.004,0.00237128885001)
(0.002,0.000264170013348)
(0.0008,2.54661092246e-06)
(0.0004,0)
};


	\end{axis}
\end{tikzpicture}
\end{center}
\end{minipage}
\begin{minipage}[c]{0.45\textwidth}
    \begin{center}
\begin{tikzpicture}[scale=.8]
    \begin{axis}[ylabel=$\Delta$, xlabel=$\lambda_2$, xmode=log, ymode=log, ymax=10, ymin=0.00000001, xmin=.0001, xmax=.1, title={$\omega_2$}]
      \addplot+[only marks,mark=-,mark options={color=black,fill=black},mark size=3] coordinates {
(0.024,1.15165485711)
(0.02,0.448088969728)
(0.016,0.0639110203066)
(0.012,0.184477118479)
(0.008,0.0460831684078)
(0.004,0.0086788078288)
(0.002,0.00200245496066)
(0.0008,0.000112527789359)
(0.0004,5.82511550017e-06)
};
\addplot+[only marks,mark=x,mark options={color=black,fill=black},mark size=3] coordinates {
(0.024,1.15089181401)
(0.02,0.447369003367)
(0.016,0.0632383399607)
(0.012,0.18385723706)
(0.008,0.0455236655028)
(0.004,0.00819120937166)
(0.002,0.00195197577498)
(0.0008,0.000105033036979)
(0.0004,3.71525744747e-06)
};
\addplot+[only marks,mark=-,mark options={color=black,fill=black},mark size=3] coordinates {
(0.024,1.15012877091)
(0.02,0.446649037006)
(0.016,0.0625656596147)
(0.012,0.183237355642)
(0.008,0.0449641625977)
(0.004,0.00770361091451)
(0.002,0.00190149658931)
(0.0008,9.75382845988e-05)
(0.0004,1.60539939477e-06)
};

	\end{axis}
\end{tikzpicture}
\end{center}
\end{minipage}
\footnotesize
\begin{tabular}{cccccccccc}
$\gamma$ & 0.4 & 0.5 & 0.6 & 0.7 & 0.8 & 0.9 & 0.95 & 0.98 & 0.99 \\ \hline\hline
$\lambda_1$ & 0.104 & 0.120 & 0.136 & 0.152 & 0.168 & 0.184 & 0.192 & 0.1968 & 0.1984\\
$\lambda_2$ & 0.024 & 0.020 & 0.016 & 0.012 & 0.008 & 0.004 & 0.002 & 0.0008 & 0.0004
\end{tabular}
\normalsize
\end{center}
\caption{Absolute 
difference $\Delta$ (marked $\times$) between PDE and MC results with $3\sigma$ error bounds ($\scriptstyle -$) for varying $\lambda_2$ and payoff weights $\omega_1$ and $\omega_2$ from (\ref{omega1}) and (\ref{omega2}), using the second order expansion method. The best fit exponents for the absolute differences are $3.04\pm 0.28$ and $2.76\pm 0.45$.
}
\label{fig:varlambda2_2}
\end{figure}
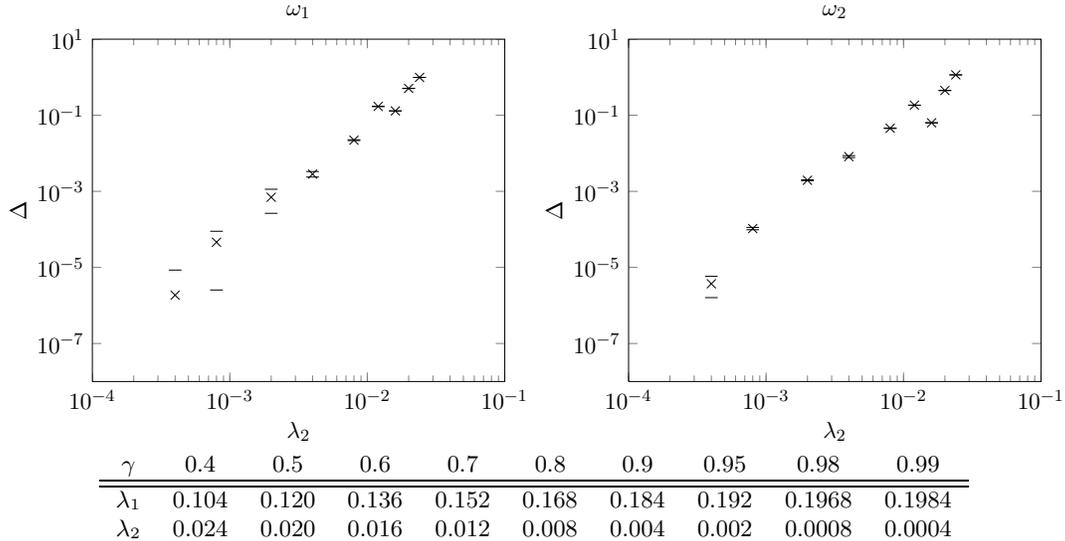
    
The matrix of eigenvectors is
\begin{eqnarray*}
Q &=& \left(\begin{array}{ccccc}
0.4472&  \star &  \star &\star &\star\\
0.4472&  \star &  \star &\star &\star\\
0.4472&  \star &  \star &\star &\star\\
0.4472&  \star &  \star &\star &\star\\
0.4472&  \star &  \star &\star &\star 
\end{array}\right),
\end{eqnarray*}
 where the $\star$ entries can be any set of normalized, orthogonal vectors which span the $N-1$ dimensional subspace orthogonal to the first eigenvector (since $\lambda_2 = \ldots = \lambda_N$). For weight vectors $\omega$ closely aligned with the first eigenvector, such as $\omega = (1/5,1/5,1/5,1/5,1/5)$, the second order scheme is extremely accurate and the resulting absolute error is considerably less than $10^{-5}$ even for the larger of the values of $\lambda_2$ considered here. We instead choose payoff weights
    \begin{eqnarray*}
        \omega_1 &=& (1,-1,1,-1,1), \\
        \omega_2 &=& (3/2,3/2,-1/2,-1/2,-1),
    \end{eqnarray*}
   for which the error is relatively large, since they are not aligned with any of the eigenvectors.\footnote{The angle between $q_1$ and $\omega_1$ and that between $q_1$ and $\omega_2$ are both just under $80\deg$, i.e., closer to orthogonal than parallel.}
This allows us to show convergence across a wide range of $\gamma$ values (see (\ref{eqn:corr})) in Figure \ref{fig:varlambda2_2}. The data points again appear to follow a power law $\Delta \sim \lambda_2^p$. 
The best fit exponents $p$
from Figure \ref{fig:varlambda2_2}
compare well to the theoretical order of $3$.
    
%
%
%
%
%

\subsection{Computational times}

In this section we briefly discuss the computational times for the expansion solutions in relation to Monte Carlo estimation.
Table \ref{tab:runtimes} gives CPU times for the set-up in Sections \ref{subsec:numfirst} and \ref{subsec:numsecond}.

\begin{table}
\begin{center}
\small
\begin{tabular}{c||c|c||c|c}
Method & MC ($N=10$) & ADI ($N=10$, $s=1$) & MC ($N=5$) & ADI ($N=5$, $s=2$) \\ \hline \hline
Paths / Points & $10^{10}$ & $M=50$, $J=800$ & $10^{10}$ & $M=50$, $J=500$ \\
CPU time (sec) & 17902 & 68 & 7737 & 36445 
\end{tabular}
\end{center}
\caption{Computational times on a  2.8 GHz Intel Core 2 Duo processor with 8 GB 1067 MHz DDR3, running Matlab R2016b.
The ADI schemes use $M$ time steps and $J$ mesh intervals in each direction, i.e.\ $6.4\times 10^5$ unknowns in the 2-dimensional case ($s=1$)
and $1.25\times 10^8$ unknowns in the three-dimensional case ($s=2$).}
\label{tab:runtimes}
\end{table}

For the first order expansion ($r=1$, $s=1$) of Section \ref{subsec:numfirst}, CPU times are dominated by the numerical solution of the nine
two-dimensional PDEs involved, whereas for the second order expansion ($r=1$, $s=2$) of Section \ref{subsec:numsecond},
the dominant contribution comes from the six three-dimensional PDEs.

The time for the first order expansion with ten assets is roughly one minute, compared to five hours for Monte Carlo,
and for the second order expansion with five assets, roughly ten hours, compared to two hours with Monte Carlo.
A few comments are in order.

The ADI computing times are broadly similar to the ones reported in \cite{in2010adi}
for two-dimensional PDEs and in \cite{haentjens2010adi} for three-dimensional PDEs, extrapolating to our finer meshes and taking into account minor differences in the set-up.

The numbers of Monte Carlo paths, ADI time steps and mesh points were chosen to make the numerical error much smaller than the expansion error, as the latter is the one of interest for the purposes of this paper. In practical applications, there is no benefit in reducing the Monte Carlo and finite difference error far below the expansion error, so that a significantly smaller number of paths and points can be used, reducing the computing time.

Specifically, the finite difference error for the above setup is substantially below $10^{-3}$. For a practically acceptable accuracy of one basis-point, i.e.\ $1/100 \ \%$ of $S_0=100$, equalling $10^{-2}$, it would suffice to take a quarter of the steps, $J=200$ and $M=12$, reducing the computational time for the first order expansion by a factor of $2^6$ to about 1 sec. Equally, a $95\%$ Monte Carlo confidence interval with width $2 \cdot 10^{-2}$ could already be obtained with $10^7$ samples, in 18 secs.  

The Monte Carlo estimator samples the joint log-normal distribution in a single time step, which would not be possible if a more general model was chosen, which biases the computing times by a factor of 12 or 50, respectively, in favour of the Monte Carlo method. So in a realistic set-up, given the estimated run-times from the previous paragraph, the combined first order expansion finite difference method will be about a factor of 
$12\times 18 \approx 200$ faster than a Monte Carlo estimator with similar accuracy.


\subsection{The effect of kinks}

    To demonstrate how the solution behaves around kink points, we switch to a geometric basket option with $N=10$ and payoff
    \begin{eqnarray}
    \nonumber
        g(S) &=& \max\left( \prod_{i=1}^N S_i^{\omega_i} - K , 0 \right) = \max\left( \exp\left(\sum_{i=1}^N \omega_i\log{S_i}\right) - K , 0 \right).
    \end{eqnarray}
    See Example  \ref{ex:GeometricBasket} for the two-dimensional case.
   Due to the log-normality, this results in a linear kink, for which we can easily choose vectors of weights $\omega$ which are either exactly parallel, orthogonal, or otherwise oriented in relation to the eigenvectors of $\Sigma$ to best examine this effect.
   
    For $\omega$ which are not orthogonal to $q_1$, such as $\omega = q_1$ or $\omega = q_1 + q_2$, this expansion scheme is extremely accurate, even at the kink point $K = 1$. This was expected from the previous section. We thus focus on the orthogonal case where no smoothing occurs
(see Example \ref{ex:GeometricBasket} in Appendix \ref{subsec:exnonsmooth}),
    namely,
\footnote{These two seemingly arbitrary eigenvectors
are taken from a MATLAB decomposition for $\Sigma$.}
{\small
    \begin{eqnarray*}
        \omega_1 &=& \left(
 -0.1160,
0.0929,
-0.6527,
-0.1121,
0.6986,
0.2091,
-0.0438,
-0.0758,
0.0000,
0.000
\right), \\
\omega_2 &=& \left(
 0.1130,
-0.0607,
-0.1708,
-0.2057,
 0.8971,
-0.2467,
-0.1831,
-0.1085,
-0.0345,
-0.0001
\right).
    \end{eqnarray*}

    }
   The kink point is now located at strike price $K = 1$. Figure \ref{fig:kinkpoint} shows that there is indeed a marked increase in the error size around the kink and that the scheme is very accurate away from it.
    The estimated convergence order away from the kink (see Figure \ref{fig:kinkpoint}) is consistent with the theoretically predicted order of 2.
At the kink, the order of convergence appears reduced to below 1, although we expect it to be 1 from Example  \ref{ex:GeometricBasket}.

     \begin{figure}[tbp]
\begin{center}
\hspace{-0.04 \textwidth}
    \begin{minipage}[c]{0.3\textwidth}
    \begin{center}
\begin{tikzpicture}[scale=.6]
    \begin{axis}[ylabel=$\Delta$, xlabel=$K$, ymode=log]
\addplot[mark=x,mark options={color=black,fill=black},mark size=3,blue] coordinates {
(0.95,0.00347543763529)
(0.98,0.0227607645389)
(0.99,0.0445079901946)
(1.0,0.0779470151649)
(1.01,0.0445012232155)
(1.02,0.0224818562153)
(1.05,0.00519052849084)
};
\addplot[mark=x,mark options={color=black,fill=black},mark size=3,red] coordinates {
(0.95,0.000294286922009)
(0.98,0.0142556033338)
(0.99,0.0237388171223)
(1.0,0.0458659348561)
(1.01,0.023746534674)
(1.02,0.0140965315816)
(1.05,0.000374546773123)
};
	\end{axis}
\end{tikzpicture}
\end{center}
\end{minipage}
\hspace{0.02 \textwidth}
    \begin{minipage}[c]{0.3\textwidth}
    \begin{center}
\begin{tikzpicture}[scale=.6]
    \begin{axis}[ylabel=$\Delta$, xlabel=$\lambda_2$, xmode=log, ymode=log]
\addplot+[only marks,mark=-,mark options={color=black,fill=black},mark size=3] coordinates {
(0.024,5.35648340767e-05)
(0.02,3.69859623715e-05)
(0.016,2.15364206363e-05)
(0.012,1.29431474174e-05)
(0.008,9.59614274121e-06)
(0.004,3.37370680599e-06)
(0.004,0.0000017706331392068)
(0.002,0.0000003267212922781)
(0.0008,0.0000000786172656000)
(0.0004,0.0000000353831623376)
};
\addplot+[only marks,mark=x,mark options={color=black,fill=black},mark size=3] coordinates {
(0.024,4.8832803706e-05)
(0.02,3.2679167679e-05)
(0.016,1.7695808042e-05)
(0.012,9.62703431695e-06)
(0.008,6.89669990206e-06)
(0.004,0.0000015384391990540)
(0.002,0.0000002404270619839)
(0.0008,0.0000000393648287369)
(0.0004,0.0000000152060816369)
};
\addplot+[only marks,mark=-,mark options={color=black,fill=black},mark size=3] coordinates {
(0.024,4.41007733354e-05)
(0.02,2.83723729866e-05)
(0.016,1.38551954477e-05)
(0.012,6.31092121653e-06)
(0.008,4.19725706291e-06)
(0.004,0.0000013062452589011)
(0.002,0.0000001541328316896)
};


\end{axis}
\end{tikzpicture}
\end{center}
\end{minipage}
\hspace{0.02 \textwidth}
\begin{minipage}[c]{0.3\textwidth}
    \begin{center}
\begin{tikzpicture}[scale=.6]
    \begin{axis}[ylabel=$\Delta$, xlabel=$\lambda_2$, xmode=log, ymode=log]
\addplot+[only marks,mark=-,mark options={color=black,fill=black},mark size=3] coordinates {
(0.024,0.0768045909375)
(0.02,0.0617744988907)
(0.016,0.0779628792875)
(0.012,0.0617602723576)
(0.008,0.0298314133824)
(0.004,0.0256853454518)
(0.002,0.018890466708701)
(0.0008,0.008617451404051)
(0.0004,0.007169182148139)
};
\addplot+[only marks,mark=x,mark options={color=black,fill=black},mark size=3] coordinates {
(0.024,0.0767732919888)
(0.02,0.061746296976)
(0.016,0.077938013533)
(0.012,0.0617390810957)
(0.008,0.0298144332607)
(0.004,0.0256736231303)
(0.002,0.018890380414471)
(0.0008,0.008617412151614)
(0.0004,0.007169161971058)
};
\addplot+[only marks,mark=-,mark options={color=black,fill=black},mark size=3] coordinates {
(0.024,0.0767419930402)
(0.02,0.0617180950614)
(0.016,0.0779131477786)
(0.012,0.0617178898338)
(0.008,0.0297974531389)
(0.004,0.0256619008088)
(0.002,0.018890294120241)
(0.0008,0.008617372899177)
(0.0004,0.007169141793977)
};


\end{axis}
\end{tikzpicture}
\end{center}
\end{minipage}
\end{center}
\caption{Left: Absolute error  $\Delta$ (marked $\times$) between PDE and MC solution for a geometric basket with payoff weight $\omega_1$ (bottom) and $ \omega_2$ (top) around the kink point at $K=1$ for $\gamma = 0.6$ for fixed spot price and varying strike. The option value is around $5\cdot 10^{-2}$. Centre/Right: Absolute  
errors $\Delta$ (marked $\times$) with $3\sigma$ error bounds ($\scriptstyle -$)
for $\omega_1$ with different values for $\lambda_2$ at $K = 0.5$ (centre) and $K=1$ (right). The best fit exponents for the errors are $2.01\pm 0.13$ and $0.61\pm 0.10  $ ($95\%$ confidence bounds from a linear regression on a log-log scale).
}
\label{fig:kinkpoint}
\end{figure}
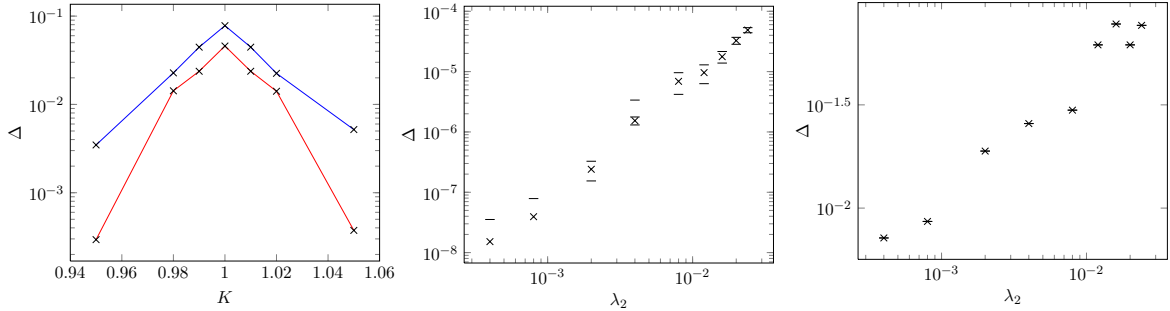

It is worth emphasising that a high order of convergence is achieved in all but the at-the-money, orthogonal case, despite the non-smooth initial data. This is in line with, and empirically validates, our theoretical analysis in the previous sections.

  \section{Conclusion and outlook}\label{sec:Conclusion}

	In this paper, we provide a thorough theoretical analysis of the PCA-based expansion approach for 
	parabolic PDEs first introduced in \cite{RW07}. Building on our previous work in \cite{RW13}, we give a description of a method which is general enough to cover a wide range of important cases and yet allows concrete error bounds in special cases, such as the ones given in Section \ref{sec:ErrorBounds}. We illustrate our theoretical results with numerical experiments, which show accuracy and convergence in line with theoretical predictions.

    This work focuses on 
    the case of constant coefficients, where the PDE can be transformed to the $N$-dimensional heat equation
    via rotation into the eigensystem of the covariance matrix. Using 
    PDEs for the expansion error itself,
    a careful analysis of the applicability of the method and the size of the expansion error was possible.

    For non-constant coefficients, one can approximate the PDE via localization. 
%
%
	    Consider an $N$-dimensional linear parabolic PDE
    \begin{eqnarray}\label{eq:PDEDef1}
        \frac{\partial v}{\partial t} &=& \sum_{i,j=1}^N \Sigma_{ij}(x,t) \frac{\partial^2 v}{\partial x_i \partial x_j} + \sum_{i=1}^N \mu_i(x,t) \frac{\partial v}{\partial x_i}, \label{eq:PDEDef1_2}
    \end{eqnarray}
    where $\Sigma_{ij} = \sigma_i \sigma_j \rho_{ij}$ is a function of $x$ and $t$.
For simplicity, we have omitted a zero order term, but 
a term of the form $c(t) v$ can directly be accounted for by the definition of a new function $\exp(\int_0^t c(\tau) d\tau)v$.

A transformation to the heat equation like in the constant coefficient case of Section \ref{subsec:defmodel} is generally no longer possible.

A simple approximation is to replace $\Sigma(x,t)$ and $\mu(x,t)$ by $\Sigma(x_0,0)$ and $\mu(x_0,0)$, respectively, for some fixed $x_0$. If one is interested only in the solution at $(x_0,t)$ for not too large $t$, this may be justified. One can then proceed as in Section \ref{subsec:defmodel}.
     This approach leads to a localization error 
     which depends on the rate of change of the PDE coefficients in the vicinity of $(x_0,0)$. 
     
     A concrete example where this approach can be successful is the LIBOR Market Model, where the LIBOR rates process has a strongly non-linear drift in the pricing measure, resulting in non-constant coefficients for the first derivatives in the pricing PDE. In \cite{RW13}, the drift is ``frozen'' at the initial value of the process, leading to a constant coefficient PDE, which is then approximated by an expansion. The numerical tests for Bermudan and path-dependent derivatives illustrate the error from the constant coefficient approximation, as well as the expansion error, and indicate that for moderate maturities both are within an acceptable range. 

   Alternatively, one can adapt the PDE expansion method to directly incorporate non-constant PDE coefficients.
By choosing $Q$ as the eigenvectors of $\Sigma(x_0,0)$ and transforming the PDE (\ref{eq:PDEDef1}) to the coordinate system
$z = z(x,t) = Q^T (x + \int_0^t \mu(x_0,s) \, ds)$, one obtains a PDE
    \begin{eqnarray}\label{eq:PDEDef11}
        \frac{\partial u}{\partial t} &=& \frac{1}{2} \sum_{i,j=1}^N \Lambda_{ij}(z,t) \frac{\partial^2 u}{\partial z_i \partial z_j} + \sum_{i=1}^N \kappa_i(z,t) \frac{\partial u}{\partial z_i}   ,     
    \end{eqnarray}
with coefficients $\Lambda$ and $\kappa$ such that $\Lambda_{ij}(z_0,0) = 0$ for $i\neq j$ and $\kappa_i(z_0,0)=0$, where $z_0 = z(x_0,0)$.

The dimensionality of the PDE can be reduced by setting the diffusion and drift coefficients to zero for indices $i \notin \nu$, for
a given index set $\nu$.
More specifically, we set $\tilde{\Lambda}_{ij}=\tilde{\Lambda}_{ji}=0$ and $\tilde{\kappa}_i = 0$ for $i \notin \nu$ and for all $1\le j\le N$,
and $\tilde{\Lambda}_{ij} = {\Lambda}_{ij}$ and $\tilde{\kappa}_i = \kappa_i$ otherwise.
One can then define an approximation $\tilde{u}^\nu(z,t)$ 
as solution to (\ref{eq:PDEDef11}) with $\Lambda$ and $\kappa$ replaced by $\tilde{\Lambda}$ and $\tilde{\kappa}$.

The dimensionality of the PDE is then $|\nu|$. The index sets of interest are of the form $\nu_r = \{1,\ldots, r\}$, $\nu_r\cup \{k\}$ for some $r<k\le N$, or $\nu_r\cup \{k,l\}$ for some $r<k<l\le N$, in the spirit of this paper.

It is shown in \cite{RW17} how to build dimensionwise decompositions in this case and numerical results are presented for variable (in time and space) volatilities and correlations. 
Those preliminary tests in \cite{RW17} show that good accuracy is often already obtained for the first order version even in the variable coefficient setting, but for fast varying coefficients the error can be significantly larger than for constant coefficients.
A sketch of a possible analysis in this case is given  in \cite{RW17}, but theoretical error bounds for variable coefficients remain an open question.

It would be interesting to conduct further tests on practically used local or stochastic volatility models. For options on a baskets of $N$ assets, where each asset price process has its own stochastic volatility, the above method applied directly to the $2N$-dimensional pricing PDE can be expected to work well when the asset prices are strongly correlated, and strongly negatively correlated with their stochastic volatilities, combined with slowly varying volatility. This regime should be close to the constant coefficient case and the eigenvalues of the ``frozen'' covariance matrix will decay rapidly.
Interesting adaptations of the method could take advantage of the fact that the payoff is only a function of the assets and not the volatility, that the inclusion of each stochastic  volatility can be seen as adding a correction, and one might even combine the eigenvalue expansion with asymptotic expansions with respect to fast and slow volatility scales, see \cite{fouque2011multiscale}.

It is conceivable that the method might also be successfully applied to partial integro-differential equations arising from multivariate jump-diffusion processes, if a suitable expansion around a dominant component can be defined, e.g., if the jump process can be interpreted as a time-changed Brownian motion.

\bibliographystyle{plain}
\bibliography{Bibliography}

	
	\appendix
	\section{Proofs of lemmas}

\subsection{Proof of Lemma \ref{lem:PDEerror}}
\label{app:PDEerror}
	\small
	
		By definition,
	    \begin{eqnarray}
	        \frac{\partial}{\partial t}\hat{u}^\xi 
			&=& \sum_{(w,\nu)\in\xi}w\mathcal{L}^\nu u^\nu - \mathcal{L}u\nonumber\\
			&=& \left[\mathcal{L}^{\nu_\xi}-\mathcal{L}^{\nu_\xi}\right]\sum_{(w,\nu)\in\xi}wu^\nu + \sum_{(w,\nu)\in\xi}w\mathcal{L}^\nu u^\nu 
			+ \left[\mathcal{L}^{\nu_\xi}-\mathcal{L}^{\nu_\xi}\right]u - \mathcal{L}u  \nonumber\\
			&=& \mathcal{L}^{\nu_\xi}\left[u^\xi - u\right] + \sum_{(w,\nu)\in\xi}w\left[\mathcal{L}^\nu-\mathcal{L}^{\nu_\xi}\right] u^\nu 
			+ \left[\mathcal{L}^{\nu_\xi}-\mathcal{L}\right]u. 
	    \end{eqnarray}

\subsection{Proof of Lemma \ref{lem:PDEerror1}}
\label{app:PDEerror1}
		    We use Lemma \ref{lem:PDEerror}. The relevant differential operators 
		    are
    \begin{eqnarray*}
    \mathcal{L} = \mathcal{L}^{\{1,\ldots,N\}} = \sum_{k=1}^N{ \lambda_k\frac{\partial^2}{\partial z_k^2}}, \quad
    \mathcal{L}^{\{1,\ldots,r\}} = \sum_{k=1}^r{ \lambda_k\frac{\partial^2}{\partial z_k^2}}, \quad
    \mathcal{L}^{\{1,\ldots,r,k\}} = \mathcal{L}^{\{1,\ldots,r\}} + \lambda_i\frac{\partial^2}{\partial z_k^2},
    \end{eqnarray*}
	and so on. Using this we can rewrite equation (\ref{eq:PDEuhatxi1}) as
    \begin{eqnarray*}
        \frac{\partial}{\partial t} \hat{u}^\xi &=& \mathcal{L}^{\{1,\ldots,r\}}\hat{u}^\xi +
		\sum_{k=r+1}^N{\left[ \mathcal{L}^{\{1,\ldots,r,k\}} - \mathcal{L}^{\{1,\ldots,r\}} \right]u^{\{1,\ldots,r,k\}}} + \left[ \mathcal{L}^{\{1,\ldots,r\}} - \mathcal{L} \right]u \\
		&=& \sum_{k=1}^r{ \lambda_k\frac{\partial^2}{\partial z_k^2}}\hat{u}^\xi + \sum_{k=r+1}^N{ \lambda_k\frac{\partial^2}{\partial z_k^2} \left[ u^{\{1,\ldots,r,k\}}-u \right]}.
    \end{eqnarray*}

\subsection{Proof of Lemma \ref{lem:BoundPDEDiff3}}
\label{app:BoundPDEDiff3}


We start with equation (\ref{eq:PDEuhatxi1})
 \begin{eqnarray*}
	        \frac{\partial}{\partial t} \hat{u}^\xi &=& \mathcal{L}^{\nu_\xi}\hat{u}^\xi + \sum_{(w,\nu)\in\xi}w\left[ \mathcal{L}^{\nu} - \mathcal{L}^{\nu_\xi} \right]u^\nu 
	        + \left[\mathcal{L}^{\nu_\xi}-\mathcal{L}\right]u
	    \end{eqnarray*}
and insert explicitly $\xi$ from (\ref{eq:xiTaylorExpansion2,r}) to obtain
    \begin{eqnarray*}
        \frac{\partial}{\partial t} \hat{u}^\xi &=& \mathcal{L}^{\{1,\ldots,r\}}\hat{u}^\xi + (1+(N-r)(N-r-3)/2)\left[ \mathcal{L}^{\{1,\ldots,r\}} - \mathcal{L}^{\{1,\ldots,r\}} \right]u^{\{1,\ldots,r\}} \nonumber\\
		&& + (2-(N-r))\sum_{k=r+1}^N{\left[ \mathcal{L}^{\{1,\ldots,r,k\}} - \mathcal{L}^{\{1,\ldots,r\}} \right]u^{\{1,\ldots,r,k\}}} \\
		&& + \sum_{k=r+1}^N \sum_{l=k+1}^N {\left[ \mathcal{L}^{\{1,\ldots,r,k,l\}} - \mathcal{L}^{\{1,\ldots,r\}} \right]u^{\{1,\ldots,r,k,l\}}} 
		+ \left[ \mathcal{L}^{\{1,\ldots,r\}} - \mathcal{L}^{\{1,\ldots,N\}} \right]u \\
		&=& \sum_{k=1}^r \lambda_k\frac{\partial^2}{\partial z_k^2}\hat{u}^\xi + 0  - (N-r-2) \sum_{k=r+1}^N \lambda_k\frac{\partial^2}{\partial z_k^2}u^{\{1,\ldots,r,k\}} \\
		&& + \sum_{k=r+1}^N\sum_{l=k+1}^N \left(\lambda_k\frac{\partial^2}{\partial z_k^2}+\lambda_l\frac{\partial^2}{\partial z_l^2}\right)u^{\{1,\ldots,r,k,l\}} - \sum_{k=r+1}^N \lambda_k\frac{\partial^2}{\partial z_k^2} u
    \end{eqnarray*}
Observing that the double sum contains exactly $(N-r)(N-r-1)$ terms we can rewrite this as
    \begin{eqnarray*}
        \frac{\partial}{\partial t} \hat{u}^\xi &=& \sum_{k=1}^r \lambda_k\frac{\partial^2}{\partial z_k^2}\hat{u}^\xi  - (N-r-2)\sum_{k=r+1}^N \lambda_k\frac{\partial^2}{\partial z_k^2}u 
        - (N-r-2) \sum_{k=r+1}^N \lambda_k\frac{\partial^2}{\partial z_k^2}[u^{\{1,\ldots,r,k\}} - u] \\
		&& + \sum_{k=r+1}^N\sum_{l=k+1}^N \left(\lambda_k\frac{\partial^2}{\partial z_k^2}+\lambda_l\frac{\partial^2}{\partial z_l^2}\right)[u^{\{1,\ldots,r,k,l\}}-u] 
		+ (N-r-1)\sum_{k=r+1}^N \lambda_k\frac{\partial^2}{\partial z_k^2}u - \sum_{k=r+1}^N \lambda_k\frac{\partial^2}{\partial z_k^2} u\\
		&=& \sum_{k=1}^r \lambda_k\frac{\partial^2}{\partial z_k^2}\hat{u}^\xi 
		- (N-r-2) \sum_{k=r+1}^N \lambda_k\frac{\partial^2}{\partial z_k^2}[u^{\{1,\ldots,r,k\}} - u] 
		+ \sum_{k=r+1}^N\sum_{l=k+1}^N \left(\lambda_k\frac{\partial^2}{\partial z_k^2}+\lambda_l\frac{\partial^2}{\partial z_l^2}\right)[u^{\{1,\ldots,r,k,l\}}-u].
    \end{eqnarray*}
In other words, $\hat{u}^\xi$ solves a non-homogenous $r$-dimensional heat equation with source term
\begin{eqnarray*}
        f &=& \sum_{k=r+1}^N\sum_{l=k+1}^N \left(\lambda_k\frac{\partial^2}{\partial z_k^2}+\lambda_l\frac{\partial^2}{\partial z_l^2}\right)[u^{\{1,\ldots,r,k,l\}}-u]
        - (N-r-2) \sum_{k=r+1}^N \lambda_k\frac{\partial^2}{\partial z_k^2}[u^{\{1,\ldots,r,k\}} - u].
    \end{eqnarray*}
We know from (\ref{laststep}) in the proof of the first order bound that $\hat{u}^{\{1,\ldots,r,k\}} = u^{\{1,\ldots,r,k\}} - u$ is the solution to
	    \begin{eqnarray}
			\frac{\partial}{\partial t}\hat{u}^{\{1,\ldots,r,k\}} &=& \sum_{i\in\{1,\ldots,r,k\}}\lambda_i\frac{\partial^2}{\partial z_i^2}\hat{u}^{\{1,\ldots,r,k\}} -  \sum_{i=r+1,i\neq k}^N
\lambda_i\frac{\partial^2}{\partial z_i^2}u.
	    \end{eqnarray}
Using the Green's function $\Phi^{\{1,\ldots,r,k\}}$ associated with $\mathcal{L}^{\{1,\ldots,r,k\}}$,
\begin{eqnarray*}
    \hat{u}^{\{1,\ldots,r,k\}}(z,t) &=&  -\int_0^T \int_{\mathbb{R}^N} \Phi^{\{1,\ldots,r,k\}}(y,s) \sum_{i=r+1,i\neq k}^N \lambda_i\frac{\partial^2}{\partial z_i^2} u(z-y,t-s) dy ds.
        \end{eqnarray*}
    Similarly, it follows
    \begin{eqnarray*}
    \hat{u}^{\{1,\ldots,r,k,l\}}(z,t) &=& -\int_0^T \int_{\mathbb{R}^N} \Phi^{\{1,\ldots,r,k,l\}}(y,s) \sum_{i=r+1,i\neq k,l}^N \lambda_i\frac{\partial^2}{\partial z_i^2} u(z-y,t-s) dy ds.
\end{eqnarray*}
This allows us to expand the expression for $f$. We leave out the coordinates to increase readability.
\begin{eqnarray*}
        f &=& -\sum_{k=r+1}^N\sum_{l=k+1}^N \left(\lambda_k\frac{\partial^2}{\partial z_k^2}+\lambda_l\frac{\partial^2}{\partial z_l^2}\right) \int_0^T \int_{\mathbb{R}^N} \Phi^{\{1,\ldots,r,k,l\}} \sum_{i=r+1,i\neq k,l}^N \lambda_i\frac{\partial^2}{\partial z_i^2} u \\
        && + (N-r-2) \sum_{k=r+1}^N \lambda_k\frac{\partial^2}{\partial z_k^2} \int_0^T \int_{\mathbb{R}^N} \Phi^{\{1,\ldots,r,k\}} \sum_{i=r+1,i\neq k}^N \lambda_i\frac{\partial^2}{\partial z_i^2} u \\
        &=& -\sum_{l>k\geq r+1}^N\sum_{i=r+1,i\neq k,l}^N \int_0^T \int_{\mathbb{R}^N}  \lambda_k\lambda_i \Phi^{\{1,\ldots,r,k,l\}} \frac{\partial^4}{\partial z_k^2\partial z_i^2} u + \lambda_l\lambda_i \Phi^{\{1,\ldots,r,k,l\}} \frac{\partial^4}{\partial z_l^2\partial z_i^2} u \\
        && + (N-r-2) \sum_{k=r+1}^N\sum_{i=r+1,i\neq k}^N \int_0^T \int_{\mathbb{R}^N} \lambda_k\lambda_i \Phi^{\{1,\ldots,r,k\}} \frac{\partial^4}{\partial z_k^2\partial z_i^2} u \\
    \end{eqnarray*}
Both lines contain $(N-r)(N-r-1)(N-r-2)$ individual terms. This allows us to combine them into
\begin{eqnarray}
\nonumber
        f &=& -\sum_{l>k\geq r+1}^N\sum_{i=r+1,i\neq k,l}^N \int_0^T \int_{\mathbb{R}^N}  \lambda_k\lambda_i [\Phi^{\{1,\ldots,r,k,l\}} - \Phi^{\{1,\ldots,r,k\}}] \frac{\partial^4}{\partial z_k^2\partial z_i^2} u 
        + \,\lambda_l\lambda_i [\Phi^{\{1,\ldots,r,k,l\}} - \Phi^{\{1,\ldots,r,l\}}] \frac{\partial^4}{\partial z_l^2\partial z_i^2} u.
        \label{lasteqnapp}
    \end{eqnarray}
    Now from (\ref{eq:auxPDE}),
 \begin{eqnarray*}
        \tilde{u}^{\{1,\ldots,r,k\}}(z,t) &=&  \int_0^T \int_{\mathbb{R}^N} \Phi^{\{1,\ldots,r,k\}}(y,s) u(z-y,t-s) dy ds, \\
        \frac{\partial^4}{\partial z_k^2\partial z_i^2} \tilde{u}^{\{1,\ldots,r,k\}}(z,t) &=&  \int_0^T \int_{\mathbb{R}^N} \Phi^{\{1,\ldots,r,k\}}(y,s) 
        \frac{\partial^4}{\partial z_k^2\partial z_i^2} u(z-y,t-s) dy ds,
 \end{eqnarray*}
 and similar for $\tilde{u}^{\{1,\ldots,r,k\}}$.
Inserting this in (\ref{lasteqnapp}), the result follows.    

\section{Analytical examples with non-smooth data}
\label{subsec:exnonsmooth}

For illustration, we study the two-dimensional Black-Scholes model, where the value function of a European option
on a pair of assets $S_1, S_2$, with payoff $h(S_1,S_2)$ at time $T$ is given by
$\exp(-r(T-t)) V(S_1,S_2,t)$, where
    \begin{eqnarray}
    \label{eqn:bs2d}
\hspace{-0.8 cm}         \frac{\partial V}{\partial t} + \left({\frac{1}{2}} \sigma_1^2 S_1^2 \frac{\partial^2 V}{\partial S_1^2} +
         \sigma_{12} S_1 S_2
         \frac{\partial^2 V}{\partial S_1 \partial S_2}
          +  {\frac{1}{2}} \sigma_2^2 S_2^2 \frac{\partial^2 V}{\partial S_2^2} \right)
         + r \left( S_1 \frac{\partial V}{\partial S_1}  + S_2 \frac{\partial V}{\partial S_2}
         \right)
        &\!\!\!=&\!\!\! 0,\!\!\!\!\!\! \\
V(S_1,S_2,,T) &\!\!\!=&\!\!\! h(S_1,S_2), \!\!\!\!\!\!
        \nonumber
    \end{eqnarray}
and $\sigma_{12} = \rho  \sigma_1  \sigma_2$.
		
We exploit the log-normality of the Black-Scholes model by using logarithmic coordinates
$(x_1,x_2) = (\log S_1, \log S_2)$.
We now make a  specific choice of parameters which simplifies the equations below, namely $\sigma_1^2 = \sigma^2_2 = 2 r = \sigma^2$ for some $\sigma>0$.
For a general choice of parameters, a further transformation of (\ref{eqn:bs2d}) to eliminate the drift term is needed, but we avoid this complication here as it does not change the essence of the problem.
Elementary calculation shows that  the rotation
	\begin{eqnarray*}
	    \left( \begin{array}{c} z_1 \\ z_2 \end{array} \right) &=& 
	    \left( \begin{array}{rr} 1 & 1 \\ 1 & -1 \end{array} \right) 
	    \left( \begin{array}{c} x_1 \\ x_2 \end{array} \right)
	\end{eqnarray*}
leads to a particularly simple form, the heat equation
    \begin{eqnarray*}
        \frac{\partial u}{\partial t} &=& \lambda_1 \frac{\partial^2 u}{\partial z_1^2} + \lambda_2 \frac{\partial^2 u}{\partial z_2^2},
         \\
        u(\cdot,0) &=& g
    \end{eqnarray*}
    for some $g$, in backward time $t \rightarrow T-t$.
Here,
\begin{eqnarray*}
\lambda_1 = \sigma^2 (1+\rho), \qquad \lambda_2 = \sigma^2 (1-\rho),
\end{eqnarray*}
so that for $\rho$ close to $1$ or $-1$, i.e., near perfect correlation or anti-correlation, the problem becomes close to one-dimensional and the asymptotic expansion can be expected to work particularly well.

The purpose of the following examples is to show that the optimal convergence order is only lost in degenerate cases, which we will describe precisely.
Because of the symmetry, we only expand in $\lambda_2$ without loss of generality.

    \begin{ex}[Digital call on geometric basket] \label{ex:DigitalGeometricBasket}
    
	Consider 
	the payoff function
	\begin{eqnarray*}
	    h(s_1,s_2) &=& \mathds{1}_{[0,\infty)}(s_1^{\mu_1}s_2^{\mu_2}-2)
	\end{eqnarray*}
	with real parameters $\mu_1$ and $\mu_2$, which leads to
	\begin{eqnarray*}
	    g(z_1,z_2) &=& \mathds{1}_{[0,\infty)}\left(\exp\left[\frac{\mu_1+\mu_2}{2}z_1 + \frac{\mu_1-\mu_2}{2}z_2\right] - 2\right).
	\end{eqnarray*}
	
	Note that the level curves of the argument of $\mathds{1}$ are straight lines. We consider this as a ``local'' (i.e., linearised) model for the more general case where discontinuities occur along a smooth curve.
	 
	We will consider three different choices of $\mu_1$ and $\mu_2$, and  analyse the expansion in $\lambda_2$, for fixed $\lambda_1$.
	
	$\,$
	
	\noindent
{\bf	Case 1:} $\mu_1 = 1$, $\mu_2 = 1$ (i.e., gradient of payoff orthogonal to expansion direction).
	
	$\,$
	
	\noindent
	In this case, the value of the boundary function depends only on $z_1$ and correspondingly
	
	\begin{eqnarray*}
		\frac{\partial}{\partial \lambda_2} u(z,t,\lambda)
	    &=& 0.
	\end{eqnarray*}
	
	$\,$
	
	\noindent
{\bf	Case 2:} $\mu_1 = 1$, $\mu_2 = -1$ (i.e., gradient of payoff parallel to expansion direction).
		
	$\,$
	
	\noindent
	The value of the boundary function depends only on $z_2$. We find that the derivative in $\lambda_2$ exists and, by a lengthy calculation, is given by
	
	\begin{eqnarray*}
		\frac{\partial}{\partial \lambda_2} u(z,t,\lambda) 
	    &=& -\frac{z_2-\log{2}}{2\sqrt{2t}\sqrt{\lambda_2}^3} \cdot \frac{\exp(-(z_2-\log{2})^2/4t\lambda_2)}{\sqrt{2\pi}},
	\end{eqnarray*}
	which goes to $0$ for $\lambda_2 \to 0$, regardless of the value of $z_2$. In this specific case, the existence of the derivative at the kink is due to the symmetry of the PDE and initial condition.
	
	$\,$
	
	\noindent
{\bf	Case 3:} $\mu_1 = 2$, $\mu_2 = 0$ (i.e., gradient of payoff at $45\deg$ angle to expansion direction).

	$\,$

\noindent
Here, again by elementary but lengthy calculation,
    	
	
    \begin{eqnarray*}
		\frac{\partial}{\partial \lambda_2} u(z,t,\lambda) 
		&=& \frac{\log{2}-z_1-z_2}{\sqrt{8\pi}} \frac{\sqrt{\lambda_1}}{\sqrt{\lambda_1+\lambda_2}^3} \exp\left(-\frac{(\log{2}-z_1-z_2)^2\lambda_2}{4t\lambda_1(\lambda_1+\lambda_2)}\right).
	\end{eqnarray*}
    In particular, the derivative vanishes at the kink where $z_1 + z_2 = \log{2}$. 
    Everywhere else, the derivative
    approaches a non-zero bounded value for  $\lambda_2\rightarrow 0$ for $\lambda_1>0$ fixed.
    \end{ex}

$\,$

    \begin{ex}[Standard call on geometric basket] \label{ex:GeometricBasket}
    
	Consider the same setting as in the previous Example \ref{ex:DigitalGeometricBasket}, but with payoff function
	\begin{eqnarray*}
	    h(s_1,s_2) &=& \max(s_1^{\mu_1}s_2^{\mu_2}-2 , 0), \text{ i.e.,}\\
	    g(z_1,z_2) &=& \max\left(\exp\left[\frac{\mu_1+\mu_2}{2}z_1 + \frac{\mu_1-\mu_2}{2}z_2\right] - 2 , 0 \right).
	\end{eqnarray*}

	$\,$
	
	\noindent
{\bf	Case 1:} For the first case, $\mu_1 = 1$, $\mu_2 = 1$, we have again
	\begin{eqnarray*}
		\frac{\partial}{\partial \lambda_2} u(z,t,\lambda) 
	    &=& 0.
	 \end{eqnarray*}

	$\,$
	
	\noindent
{\bf	Case 2:}
    For the second case, $\mu_1 = 1$, $\mu_2 = -1$, we find by straightforward calculus
    \begin{eqnarray}
	    \frac{\partial}{\partial \lambda_2} u(z,t,\lambda) 
	&=& \frac{t e^{z_2+\lambda_2 t}}{2} \left\{ - \frac{e^{-(z_2-\log{K}+2\lambda_2 t)^2/4t\lambda_2}}{\sqrt{\pi t\lambda_2}} + \Phi
	\left(\frac{z_2-\log{K}+2\lambda_2 t}{\sqrt{2t\lambda_2}}\right) \right\}, \label{eq:1stDiffFloor}
	\end{eqnarray}
	where $\Phi$ is the standard normal cdf.
	This derivative exists for all $z_2\in\mathbb{R}$ and all $\lambda_2 > 0$. For $\lambda_2 \rightarrow 0$ the limit exists and is well-defined provided $z_2\neq \log{2}$.
	
	If $z_2=\log{2}$, the right-hand side of $(\ref{eq:1stDiffFloor})$ does not converge for $\lambda_2 \rightarrow 0$. 
	We can, however, Taylor expand in $\sqrt{\lambda_2}$ instead of $\lambda_2$, since
	\[
	\frac{\partial}{\partial \sqrt{\lambda_2}} u(z_2,t,\lambda_2) = 2\sqrt{\lambda_2} \frac{\partial}{\partial \lambda_2} u(z_2,t,\lambda_2)
	\]
	 exists in the limit $\lambda_2\rightarrow 0$. This expansion will lead to an expansion error of size $O(\lambda_2)$ instead of $O(\lambda_2^2)$. Note that the PDE approximation method itself does not have to change.

	$\,$
	
	\noindent
{\bf	Case 3:} The third case is analytically lengthier than Example \ref{ex:DigitalGeometricBasket}, and we have therefore omitted it here. We refer to Section \ref{subsec:nonsmooth}, and particularly Examples \ref{ex:smoothing} and \ref{ex:smoothing2} for a qualitative discussion. The conclusion will be the same as in Case 3 of Example \ref{ex:DigitalGeometricBasket}, i.e., bounded derivative with respect to $\lambda_2$ for $\lambda_2\rightarrow 0$ for $\lambda_1>0$ fixed.

	\end{ex}


\end{document}